\numberwithin{equation}{section}
\date{\today}
\newtheorem{theorem}{Theorem}[section]
\newtheorem{lemma}[theorem]{Lemma}
\newtheorem{proposition}[theorem]{Proposition}
\newtheorem{corollary}[theorem]{Corollary}
\theoremstyle{definition}
\newtheorem{definition}[theorem]{Definition}
\newtheorem{example}[theorem]{Example}
\newtheorem{remark}[theorem]{Remark}
\DeclareMathOperator{\g}{\mathfrak{g}}
\begin{document}

\title[Affinization: Leibniz algebras]{Affinization of algebraic structures: Leibniz algebras}

\author[Brzezi\'nski]{Tomasz Brzezi\'nski}

\address{
Department of Mathematics, Swansea University,
Fabian Way,
  Swansea SA1 8EN, U.K.\ \newline \indent
Faculty of Mathematics, University of Bia{\l}ystok, K.\ Cio{\l}kowskiego  1M,
15-245 Bia\-{\l}ys\-tok, Poland}

\email{T.Brzezinski@swansea.ac.uk}

\author[Radziszewski]{Krzysztof Radziszewski}

\address{Doctoral School \&
Faculty of Mathematics, University of Bia{\l}ystok, K.\ Cio{\l}kowskiego,
15-245 Bia\-{\l}ys\-tok, Poland}

\email{K.Radziszewski@uwb.edu.pl}

\author[Ramos Pérez]{Brais Ramos Pérez}

\address{CITMAga, 15782 Santiago de Compostela, Spain.\newline \indent
 Faculty of Mathematics, Department of Mathematics, University of Santiago de Compostela, 15782 Santiago de Compostela, Spain.}

 \email{braisramos.perez@usc.es}

\begin{abstract}
A general procedure of affinization of linear algebra structures is illustrated by the case of Leibniz algebras. Specifically, the definition of an affine Leibniz bracket, that is, a bi-affine operation on an affine space that at each tangent vector space becomes a (bi-linear) Leibniz bracket in terms of a tri-affine operation called a Leibnizian, is given. An affine space together with such an operation is called a Leibniz affgebra. It is shown that any Leibniz algebra can be extended to a family of Leibniz affgebras. Depending on the choice of a Leibnizian different types of Leibniz affgebras are introduced. These include: derivative-type, which captures the derivation property of linear Leibniz bracket;  homogeneous-type, which is based on the simplest and least restrictive choice of the Leibnizian; Lie-type which includes all Lie affgebras introduced in [R.R.\ Andruszkiewicz, T.\ Brzezi\'nski \& K.\ Radziszewski, {\it Lie affgebras vis-à-vis Lie algebras}, Res.\ Math.\  {\bf 80} (2025), art.\ 61.]. Each type is illustrated by examples with prescribed Leibniz algebra fibres.
\end{abstract} 

\vspace{0.2cm}

\keywords{Leibniz algebra; (generalized) derivation; heap; affine space; affgebra; Leibniz affgebra.}

\subjclass[2020]{14R10, 17A32, 20N10}

\maketitle
\section{Introduction}\label{sec.intro}
Informally and in broad terms, by affinization we mean a process which converts an algebraic structure with a nullary operation (i.e., in which a chosen element plays special role) into one which has no nullary operations, but is such that by a free choice of any element it is retracted to the original structure. The prime example of this procedure is the conversion of groups into heaps, by using the ternary operation $(a,b,c)\mapsto ab^{-1}c$ \cite{Pru:the}, \cite{Bae:ein}. Once the affinization is achieved it is the affinized structure which becomes the fundamental object from which the original structure can be derived. As an illustration of this point of view, one can trace the process by which a choice of a coordinate system (or, indeed, a point which serves as the origin for this system) converts  an affine Euclidean space into a vector space. The appearance of the latter can be seen as a matter of convenience with an aim of achieving an ease of description. One can argue that the affine point of view rather than a linear one is dictated by the nature. The geometry of the space is independent of the coordinate system. In mechanics, the motion is independent of the choice of the frame of reference or the observer.  In high energy physics, fundamental interactions are independent of the gauge. It is only the mathematical description of geometry, motion or fundamental interactions which depends on the choice of coordinate system, the position of the observer or the gauge. The search for frame independent description of mechanics led W. Tulczyjew and others   to formulate a programme in which geometric or mechanical phenomena that are traditionally described in terms of vector spaces are described in terms of affine spaces; see e.g.\ \cite{Tul:fra}, \cite{Tul:ori}, \cite{BenTul:can}, \cite{GraGra:av1}, \cite{GraGra:fra}, \cite{GraGra:av2}.

Apart from its conceptual significance, the affinization leads to unification of algebraic structures. From the point of view of the heap interpretation of groups, both subgroups and their cosets are sub-heaps \cite{Certaine}. More recently, an attempt to understand braces, an algebraic structure introduced in the context of the set-theoretic Yang-Baxter equation \cite{Rum:bra}, \cite{CedJes:bra}, has led to the unification of braces with rings in terms of their mutual affinization known as trusses \cite{Brz:tru}, \cite{BRZ1}. The idea here is to replace the additive group structure by a corresponding heap and demand distributivity of multiplication over the heap operation.

Let us summarize affinization of bilinear structures briefly. The process of affinization starts with a vector-free formulation of the notion of an affine space. This is quite well known and we recall it in Section~\ref{sec.pre} following \cite{BreBrz:hea}. In this formulation, at each point $o$ of the affine space $\mathfrak{a}$, there is a unique vector space with $o$ as the zero vector, which we call a {\em vector fibre} or the {\em tangent space} at $o$ and denote it by $T_o\mathfrak{a}$. As a set, or indeed as an affine space, $T_o\mathfrak{a}$ is isomorphic to $\mathfrak{a}$. Furthermore, vector fibres of $\mathfrak{a}$ at different points of $\mathfrak{a}$ are mutually isomorphic (as vector spaces). Suppose that we have an algebraic structure with a bilinear multiplication $\bullet$ satisfying certain axioms (such as the associative law, Jacobi identity and antisymmetry, or the self-derivation property, which is discussed in the current text). We aim at achieving a bi-affine operation $*$  on an affine space $\mathfrak{a}$ such that for any $o\in \mathfrak{a}$, the uniquely linearized version of $*$ on $T_o\mathfrak{a}$ satisfies the axioms of $\bullet$. This means that, written in $T_o\mathfrak{a}$, $*$ necessarily has the form
\begin{equation}\label{bullet.star}
    a * b = a\bullet b +\lambda(a) +\mu(b) + s,
\end{equation}
for all $a,b\in \mathfrak{a}$, where $\lambda, \mu$ are uniquely derived linear operators on $T_o\mathfrak{a}$, $s\in \mathfrak{a}$ is also uniquely derived, and $\bullet$ satisfies the specified axioms. In this way we arrive at three separate problems:
\begin{description}
    \item[Q1] What axioms need to be imposed on the bi-affine operation $*$  on $\mathfrak{a}$ to guarantee the axioms for $\bullet$ on each fibre of $\mathfrak{a}$?
    \item[Q2] What necessary and sufficient conditions the data $\lambda$, $\mu$ and $s$ in equation \eqref{bullet.star} need to satisfy to fulfill the axioms imposed in answer to \textbf{Q1}?
    \item[Q3] Can every example of a specified linear structure give rise to the corresponding example of the affine structure?
\end{description}

To indicate the role of affine spaces and borrowing the neologism coined in \cite{GGU} we refer to an affinized version of an algebra as to an {\em affgebra}.
       
This programme has been carried out successfully for associative and Lie algebras. In the former case, the associative law need not be modified in any way. The requirement of bi-affine rather than  bilinear condition on multiplication leads to trusses (this answers \textbf{Q1} above), while $\lambda$, $\mu$ and $s$ form a (linearized) {\em homothetic datum} \cite{AndBrzRyb:ext}, an enriched version of {\em double homothetism} \cite{Red:ver} or {\em self-permutable bimultiplication} \cite{Mac:ext} introduced in the context of ring extensions, which answers \textbf{Q2}. The first attempt to affinize Lie algebras was made in \cite{GGU}, but such definition was dependent on a specified vector space. The vector-space independent formulation was given in \cite{BRZ2} and improved upon in \cite{BRZ3}. In this case both the antisymmetry and Jacobi rules need to be modified. This is the consequence of the fact that there is no binary operation on an affine space, but the ternary one (translating the first point by the vector from the second to the third one). As a consequence, equalities are possible only between odd combinations of (non-zero, since the operation is no nullary) terms. In order to answer \textbf{Q2} in this setting, no conditions on $s$ are imposed, while $\lambda$ and $\mu$ form a generalized derivation in the sense of \cite{LL} of a specific kind on the fibre Lie algebra. The answer to question {\bf Q3} is affirmative for associative and Lie affgebras.

The aim of this paper is to present general conditions for an affinization of a Leibniz algebra and then discuss three special cases: \emph{derivative}, \emph{homogeneous} and \emph{Lie-type} Leibniz affgebras, each offering distinct advantages and disadvantages to be discussed throughout the article. 
\section{Preliminaries}\label{sec.pre}
Along the paper, we will denote by $\mathbb{F}$ a field, a priori without any restriction about its characteristic.

\subsection{Leibniz algebras}
\begin{definition}\label{def.Leibniz}
A \textbf{Leibniz algebra} is a pair $(\mathfrak{l},[-,-])$, where $\mathfrak{l}$ is an $\mathbb{F}$-vector space and $[-,-]\colon \mathfrak{l}\times \mathfrak{l}\rightarrow \mathfrak{l}$ is a bilinear map, such that the following condition holds:
\begin{equation}\label{LeibRule}
[[a,b],c]=[[a,c],b]+[a,[b,c]]\quad\textnormal{(Leibniz rule)}
\end{equation}
for all $a,b,c\in\mathfrak{l}$. The map $[-,-]$ is referred to as the \textbf{Leibniz bracket} of $\mathfrak{l}$.

Let $(\mathfrak{l},[-,-]_{\mathfrak{l}})$ and $(\mathfrak{m},[-,-]_{\mathfrak{m}})$ be Leibniz algebras. A linear map $f\colon \mathfrak{l}\rightarrow \mathfrak{m}$ is a \textbf{Leibniz algebra morphism} if $f$ preserves the bracket, i.e., $f([a,b]_{\mathfrak{l}})=[f(a),f(b)]_{\mathfrak{m}}$ for all $a,b\in\mathfrak{l}$. The category of Leibniz algebras will be denoted by {\sf Leib}.
\end{definition}
\begin{remark}\label{Lie_algebra_is_Leibniz_algebra}
It is straightforward to prove that every Lie algebra $(\g,[-,-])$ satisfies the Leibniz rule \eqref{LeibRule}. This yields an inclusion functor $\iota\colon {\sf Lie}\longrightarrow{\sf Leib},$ where ${\sf Lie}$ denotes the category of Lie algebras.

Note that every Leibniz algebra $\mathfrak{l}$ always has the following non-trivial ideal
\[\textnormal{Leib}(\mathfrak{l})\coloneqq \operatorname{span}_{\mathbb{F}}\{[x,x]\colon x\in\mathfrak{l}\}.\]
The quotient $\mathfrak{l}/\textnormal{Leib}(\mathfrak{l})$ is always a Lie algebra, so $\mathfrak{l}$ is a Lie algebra if and only if $\textnormal{Leib}(\mathfrak{l})=0$. In other words, $\textnormal{Leib}(\mathfrak{l})$ allows to measure how far a Leibniz algebra is from being a Lie algebra. 
\end{remark}
\begin{definition} 
Let $(\mathfrak{l},[-,-])$ be a Leibniz algebra. 
\begin{enumerate}
\item[(i)] The \textbf{left center} of $\mathfrak{l}$ is the subset of $\mathfrak{l}$
\[\mathcal{LZ}(\mathfrak{l})\coloneqq\{x\in\mathfrak{l}\colon [x,\mathfrak{l}]=0\}.\]
\item[(ii)] The \textbf{right center} of $\mathfrak{l}$ is the subset of $\mathfrak{l}$
\[\mathcal{RZ}(\mathfrak{l})\coloneqq\{x\in\mathfrak{l}\colon [\mathfrak{l},x]=0\}.\]
\end{enumerate}
\end{definition}
\begin{remark}\label{rem.condLeibR}
In every Leibniz algebra $(\mathfrak{l},[-,-])$, \begin{equation}\label{condLeibR}[a,[b,b]]=0,\end{equation} 
for all $a,b\in\mathfrak{l},$ i.e., $\textnormal{Leib}(\mathfrak{l})\subseteq\mathcal{RZ}(\mathfrak{l})$. Indeed, by the Leibniz rule \eqref{LeibRule}, 
\[[[a,b],b]-[[a,b],b]-[a,[b,b]]=0\implies[a,[b,b]]=0.\]
\end{remark}
\begin{definition}
    A Leibniz algebra $(\mathfrak{l},[-,-])$ is \textbf{abelian} if $[\mathfrak{l},\mathfrak{l}]=0$.
\end{definition}
As a consequence of \eqref{condLeibR}, if $(\mathfrak{l},[-,-])$ is a Leibniz algebra with idempotent bracket, i.e., $[a,a]=a$ for all $a\in\mathfrak{l}$, then $\mathfrak{l}$ is automatically abelian.

The notion of derivation of a Leibniz algebra is defined analogously to that in general algebraic structures.
\begin{definition}
    Let $(\mathfrak{l},[-,-])$ be a Leibniz algebra. A \textbf{derivation} of $\mathfrak{l}$ is a linear endomorphism $L\colon \mathfrak{l}\rightarrow \mathfrak{l}$ satisfying the Leibniz rule with respect to the Leibniz bracket:
\begin{equation*}\label{der_def}
[L(a),b]+[a,L(b)]=L([a,b])
\end{equation*}
for all $a,b\in\mathfrak{l}$. We will denote the collection of all derivations of $\mathfrak{l}$ by $\operatorname{Der}(\mathfrak{l})$.
\end{definition}

Thus, equation \eqref{LeibRule} is equivalent to the fact that $\operatorname{ad}_{a}\coloneqq[-,a]\in\operatorname{Der}(\mathfrak{l})$ for all $a\in\mathfrak{l}$.

\subsection{Heaps and affine spaces}
\begin{definition}
A \textbf{heap} is a set $X$ together with a ternary operation $\langle-,-,-\rangle\colon X\times X\times X\rightarrow X$ which satisfies the following conditions:
\begin{subequations}\label{heapcond}
\begin{align}\label{h_asoc}
\langle\langle a,b,c\rangle,d,e\rangle=\langle a,b,\langle c,d,e\rangle\rangle\quad&\textnormal{(associativity),}\\
\label{h_Malcev1}\langle a,b,b\rangle=a \quad&\textnormal{(Mal'cev identity 1),}\\
\label{h_Malcev2}\langle a,a,b\rangle=b \quad &\textnormal{(Mal'cev identity 2),}
\end{align}
\end{subequations}
for all $a,b,c,d,e\in X$. A heap $(X,\langle -,-,-\rangle)$ is said to be \textbf{abelian} if the equality
\begin{equation}\label{h_ab}
\langle a,b,c\rangle=\langle c,b,a\rangle
\end{equation}
also holds. 

Let $(X,\langle -,-,-\rangle_{X})$ and $(Y,\langle -,-,-\rangle_{Y})$ be heaps. A map $f\colon X\rightarrow Y$ is a \textbf{heap morphism} if $f$ preserves the heap ternary operation, i.e., $f(\langle a,b,c\rangle_{X})=\langle f(a),f(b),f(c)\rangle_{Y}$ for all $a,b,c\in X$.
\end{definition}

Due to \eqref{h_asoc}, we simply write $\langle a,b,c,d,e\rangle$ for $\langle\langle a,b,c\rangle,d,e\rangle=\langle a,b,\langle c,d,e\rangle\rangle$ because the distribution of brackets does not matter. Note that if $X$ is an abelian heap, then \eqref{h_Malcev1} is equivalent to \eqref{h_Malcev2}.

There exists a strong relationship between abelian heaps and abelian groups. On the one hand, if $(G,+)$ is an abelian group, then $G$ with the ternary operation defined by
\[\langle x,y,z\rangle_{G}\coloneqq x-y+z,\]
for all $x,y,z\in G$, is an abelian heap. On the other hand, if $(X,\langle -,-,-\rangle)$ is an abelian heap and $o\in X$, then the binary operation
\[a+b\coloneqq\langle a,o,b\rangle,\]
for all $a,b\in X$, induces an abelian group structure on $X$. The unit is the fixed element $o$, and the inverse of $a\in X$ with respect to $+$ is $-a\coloneqq \langle o,a,o\rangle$. This group structure is called \textbf{the retract of $X$ at $o$}, denoted by $X_{o}$. All retracts of a given heap are mutually isomorphic (as groups) (see \cite[Lemma 2.1]{BRZ1}).

\begin{definition}\label{affspace}
An \textbf{$\mathbb{F}$-affine space} (or simply, \textbf{affine space}) is a triple $(\mathfrak{a},\langle -,-,-\rangle, -\triangleright_{-}-)$, where $(\mathfrak{a},\langle -,-,-\rangle)$ is a non-empty abelian heap and $-\triangleright_{-}-$ is a ternary map:
\begin{align*}
-\triangleright_{-}-\colon \mathbb{F}\times\mathfrak{a}\times\mathfrak{a}&\rightarrow\mathfrak{a}\\(\alpha,a,b)&\mapsto \alpha\triangleright_{a}b,
\end{align*}
satisfying the following conditions:
\begin{enumerate}
\item[(i)] For all $\alpha\in\mathbb{F}$ and $a\in \mathfrak{a}$, the map $\alpha\triangleright_{a}-\colon\mathfrak{a}\rightarrow \mathfrak{a}$ is a heap morphism.
\item[(ii)] For all $a,b\in\mathfrak{a}$, the map $-\triangleright_{a}b\colon\mathbb{F}\rightarrow\mathfrak{a}$ is a heap morphism considering that the heap ternary operation in $\mathbb{F}$ is given by $\langle\alpha,\beta,\gamma\rangle_{\mathbb{F}}\coloneqq \alpha-\beta+\gamma$ for all $\alpha,\beta,\gamma\in\mathbb{F}$.
\item[(iii)] For all $a\in\mathfrak{a}$, the map $-\triangleright_{a}-\colon\mathbb{F}\times\mathfrak{a}\rightarrow \mathfrak{a}$ is a left action, i.e., $1_{\mathbb{F}}\triangleright_{a}b=b$ and $\alpha\triangleright_{a}(\beta\triangleright_{a}b)=(\alpha\beta)\triangleright_{a}b$, for all $\alpha,\beta\in\mathbb{F}$ and for all $b\in\mathfrak{a}$.
\item[(iv)] For all $a,b\in\mathfrak{a}$, $0_{\mathbb{F}}\triangleright_{a}b=a$.
\item[(v)] (\textbf{Base change property})
\[\alpha\triangleright_{a}b=\langle \alpha\triangleright_{c}b,\alpha\triangleright_{c}a,a\rangle\]
for all $\alpha\in\mathbb{F}$ and for all $a,b,c\in \mathfrak{a}$.
\end{enumerate}

Let $(\mathfrak{a},\langle -,-,-\rangle_{\mathfrak{a}}, -\triangleright^{\mathfrak{a}}_{-}-)$ and $(\mathfrak{b},\langle -,-,-\rangle_{\mathfrak{b}}, -\triangleright^{\mathfrak{b}}_{-}-)$ be affine spaces. A map $f\colon \mathfrak{a}\rightarrow\mathfrak{b}$ is an \textbf{affine morphism} if $f$ is a heap morphism and also the equality
\begin{equation}\label{affmap_cond}
f(\alpha\triangleright^{\mathfrak{a}}_{a}b)=\alpha\triangleright^{\mathfrak{b}}_{f(a)}f(b)
\end{equation}
holds, for all $a,b\in\mathfrak{a}$ and for all $\alpha\in\mathbb{F}$.
\end{definition}

The following two properties are deduced from the axioms of an affine space (see \cite[Lemma 3.5]{BreBrz:hea} for detailed proofs):
\begin{enumerate}
\item[(i)] The map $\alpha\triangleright_{-}b\colon \mathfrak{a}\rightarrow\mathfrak{a}$ is a heap morphism for all $\alpha\in\mathbb{F}$ and $b\in\mathfrak{a}$.
\item[(ii)] For all $\alpha\in \mathbb{F}$ and $a\in\mathfrak{a}$, the equality 
\begin{equation}\label{prop_affspace}\alpha\triangleright_{a}a=a\end{equation}
holds.
\end{enumerate}
\begin{remark}\label{morph_aff_lin}
Let $(\mathfrak{a},\langle -,-,-\rangle, -\triangleright_{-}-)$ be an affine space and fix $o\in\mathfrak{a}$. The retract of $\mathfrak{a}$ at $o$, $\mathfrak{a}_{o}$, is an $\mathbb{F}$-vector space with the external operation
\[\alpha a\coloneqq\alpha\triangleright_{o}a,\]
for all $\alpha\in\mathbb{F}$ and for all $a\in\mathfrak{a}.$ This vector space is called \textbf{the tangent space to $\mathfrak{a}$ at $o$} (or simply, the \textbf{fibre of $\mathfrak{a}$ at $o$}), and  denoted by $T_{o}\mathfrak{a}$.
In terms of the external operation in $T_{o}\mathfrak{a}$,
\begin{equation}\label{relation_external_triangle}
\alpha\triangleright_{a}b=(1-\alpha)a+\alpha b.
\end{equation}
Moreover, $T_{o}\mathfrak{a}$ acts freely and transitively over $\mathfrak{a}$ via the following action
\begin{align*}
\psi\colon \mathfrak{a}\times T_{o}\mathfrak{a}\rightarrow\mathfrak{a}\qquad (a,b)\mapsto \psi(a,b)\coloneqq \langle a,o,b\rangle.
\end{align*}
Note also that fibres of $\mathfrak{a}$ are mutually isomorphic (as vector spaces) (see \cite[Remark 2.5]{BRZ2}).

In addition, if $f\colon \mathfrak{a}\rightarrow \mathfrak{b}$ is an affine morphism, then it induces an unique linear map $\overrightarrow{f}$ between $T_{o}\mathfrak{a}$ and $T_{u}\mathfrak{b}$ defined by $\overrightarrow{f}(a)\coloneqq f(a)-f(o)$ for all $a\in\mathfrak{a}$. 
Conversely, if $\widehat{f}\colon T_{o}\mathfrak{a}\rightarrow T_{u}\mathfrak{b}$ is a linear map and we fix an element $b\in\mathfrak{b}$, then the map 
    \begin{align*}
f\colon \mathfrak{a}\rightarrow\mathfrak{b}\qquad a\mapsto f(a)\coloneqq \widehat{f}(a)+b=\langle \widehat{f}(a),u,b\rangle
\end{align*}
is an affine map, and also all affine maps from $\mathfrak{a}$ to $\mathfrak{b}$ arise in this way. Note that $b = f(o)$.

To make a connection with a more traditional definition of an affine space as a set $\mathfrak{a}$ with a free and transitive action of a vector space $\overrightarrow{\mathfrak{a}}$, we can set explicitly $\overrightarrow{\mathfrak{a}}= T_o\mathfrak{a}$ (or any other vector space isomorphic to a vector fibre of $\mathfrak{a}$). The action is then given by $a+v \coloneqq \langle a,o,v\rangle$, for all $a\in \mathfrak{a}$ and $v\in T_o\mathfrak{a}$. The vector from $a$ to $b$ in $\mathfrak{a}$ is then $\overrightarrow{ab}\coloneqq\langle o,a,b\rangle$.
\end{remark}

\begin{definition}\label{def.Lie}
An $\mathbb{F}$\textbf{-affgebra} (or simply, an \textbf{affgebra}) is an affine space $\mathfrak{a}$ together with a bi-affine morphism $\mu_{\mathfrak{a}}\colon\mathfrak{a}\times\mathfrak{a}\rightarrow\mathfrak{a},$
which is called the \textbf{affine product} of the affgebra. If $\mu_{\mathfrak{a}}$ is associative, then we say that $\mathfrak{a}$ is an \textbf{associative affgebra}.

If $(\mathfrak{a},\mu_{\mathfrak{a}})$ and $(\mathfrak{b},\mu_{\mathfrak{b}})$ are affgebras, then an affine morphism $f\colon \mathfrak{a}\rightarrow\mathfrak{b}$ is said to be an \textbf{affgebra morphism} if $f(\mu_{\mathfrak{a}}(a,b))=\mu_{\mathfrak{b}}(f(a),f(b))$, for all $a,b\in\mathfrak{a}$.
\end{definition}
\begin{remark}
Let $(\mathfrak{a},\mu_{\mathfrak{a}})$ be an affgebra. In view of the fact that $\mu_{\mathfrak{a}}$ is a bi-affine morphism,  the equalities
\begin{gather*}
\mu_{\mathfrak{a}}(a,\langle b,c,d\rangle)=\langle\mu_{\mathfrak{a}}(a,b),\mu_{\mathfrak{a}}(a,c),\mu_{\mathfrak{a}}(a,d)\rangle,\\
\mu_{\mathfrak{a}}(\langle a,b,c\rangle,d)=\langle\mu_{\mathfrak{a}}(a,d),\mu_{\mathfrak{a}}(b,d),\mu_{\mathfrak{a}}(c,d)\rangle
\end{gather*}
hold for all $a,b,c,d\in\mathfrak{a}$. This means that $(\mathfrak{a},\langle -,-,-\rangle,\mu_{\mathfrak{a}})$ is a \textbf{truss} \cite[Definition 3.1]{BRZ1}. 
\end{remark}
\subsection{Lie affgebras}
\begin{definition}(\cite[Definition~2.4]{BRZ3})
Let $(\mathfrak{a},\langle -,-,-\rangle, -\triangleright_{-}-)$ be an affine space.  An \textbf{affine Lie bracket on $\mathfrak{a}$} is a bi-affine morphism
\[\{-,-\}\colon\mathfrak{a}\times\mathfrak{a}\rightarrow\mathfrak{a}\] 
satisfying the following conditions: 
\begin{subequations}
\begin{align}\label{aff_antisy}
\langle \{a,b\},\{a,a\},\{b,a\}\rangle=\{b,b\} &\quad{\footnotesize\textnormal{(affine antisymmetry),}}
\\\label{aff_jac}
  \langle\{\{a,b\},c\},\{\{a,a\},a\},\{\{b,c\},a\},\{\{b,b\},b\},\{\{c,a\},b\}\rangle=\{\{c,c\},c\} &\quad{\footnotesize\textnormal{(affine Jacobi identity)}},
\end{align}
\end{subequations}
for all $a,b,c\in\mathfrak{a}$.

An affine space $\mathfrak{a}$ with an affine Lie bracket $\{-,-\}$ is called a \textbf{Lie affgebra}. 
\end{definition}
\begin{remark}
The Jacobi identity \eqref{aff_jac} differs from that in \cite[Definition~2.4]{BRZ3} by the inside placement of brackets. As noted in \cite[Remark~2.5]{BRZ3} results obtained with one version can be easily translated into another. We choose this version here to match it better with the most widespread convention used in the theory of Leibniz algebras (see Definition~\ref{def.Leibniz}).
\end{remark}
In \cite[Theorem~2.6]{BRZ3} it is indicated that a Lie affgebra is an affinization of a Lie algebra, as the unique linearization of an affine Lie bracket on  a fibre $T_{o}\mathfrak{a}$, that is,
\[[a,b]\coloneqq\{a,b\}-\{a,o\}+\{o,o\}-\{o,b\}\]
for all $a,b\in\mathfrak{a}$, makes $T_{o}\mathfrak{a}$ a Lie algebra. The Lie algebra structure on fibres are mutually isomorphic. 

Furthermore, by \cite[Theorem~3.1]{BRZ3}, the affinization of any Lie bracket as in \eqref{bullet.star}, with $\bullet$ being a Lie bracket $[-,-]$ on $T_{o}\mathfrak{a}$, makes $*$ an affine Lie bracket on $\mathfrak{a}$ if and only if, $(\lambda,-\mu,\lambda)$ is a generalized derivation in the sense of \cite{LL} on the Lie algebra $(T_{o}\mathfrak{a},[-,-])$, that is,  
\begin{equation}\label{gen.deriv}
 \lambda([a,b]) =   [\lambda(a),b]  -[a,\mu(b)], 
\end{equation}
for all $a,b\in T_{o}\mathfrak{a}$.
Irrespective of the Lie bracket, the zero maps always satisfy \eqref{gen.deriv} and, since there are no conditions on the constant $s$, every Lie algebra is a Lie affgebra and hence question {\bf Q3} from Introduction has a positive answer.

\section{Affinization of Leibniz algebras}

In this section we define a general affinization of a Leibniz algebra. 
\begin{theorem}\label{fibre_gen}
Let $(\mathfrak{a},\langle -,-,-\rangle,-\triangleright_{-}-)$ be an affine space and let $\{-,-\}: \mathfrak{a}\times \mathfrak{a}\to \mathfrak{a}$ be a bi-affine map. Define a tri-affine map
$\Lambda\colon\mathfrak{a}\times\mathfrak{a}\times\mathfrak{a}\rightarrow \mathfrak{a}$ by
\begin{equation}\label{Leibnizian}
\Lambda(a,b,c)=\langle \{a,\{b,c\}\}, \{\{a,b\},c\}, \{\{a,c\},b\}\rangle,
\end{equation}
for all $a,b,c\in\mathfrak{a}$.
For a given $o\in\mathfrak{a}$, the linearization of $\{-,-\}$ at $o$, i.e., the bracket
\[[a,b]\coloneqq\{a,b\}-\{a,o\}+\{o,o\}-\{o,b\},\]
makes  $T_{o}\mathfrak{a}$  a Leibniz algebra if and only if  the linearization of $\Lambda(a,b,c)$ at $o$ vanishes, i.e.,
\begin{equation}\label{cond_gen}
\resizebox{0.9\textwidth}{!}{$\underbrace{\Lambda(a,b,c)-\Lambda(a,b,o)-\Lambda(a,o,c)-\Lambda(o,b,c)+\Lambda(a,o,o)+\Lambda(o,b,o)+\Lambda(o,o,c)-\Lambda(o,o,o)}_{\widetilde{\Lambda}(a,b,c)}=o,$}\end{equation}
for all $a,b,c\in \mathfrak{a}$.

Furthermore, if the linearization of $\Lambda$ vanishes at $o$, its linearization at any other point of $\mathfrak{a}$ also vanishes (i.e.\ it is equal to that point).  
\end{theorem}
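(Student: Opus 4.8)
The plan is to reduce the final claim to the equivalence just established in the first part of the theorem, by showing that the linearized brackets at two different base points are \emph{isomorphic} as candidate Leibniz algebras. Concretely, fix $o$ together with a second point $o'\in\mathfrak{a}$, and compare $[-,-]_o$ with the bracket $[a,b]_{o'}\coloneqq\{a,b\}-\{a,o'\}+\{o',o'\}-\{o',b\}$, where the latter is formed with the group operations of $T_{o'}\mathfrak{a}$.

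First I would transport everything into the single fibre $T_o\mathfrak{a}$. Using $\langle x,y,z\rangle=x-y+z$ in $T_o\mathfrak{a}$, the addition, subtraction and scalar action of $T_{o'}\mathfrak{a}$ become, in the $o$-operations, $x+_{o'}y=x+y-o'$, $x-_{o'}y=x-y+o'$ and (via the base change property (v)) $\alpha\triangleright_{o'}x=\alpha(x-o')+o'$. Substituting these into the defining formula for $[-,-]_{o'}$ and collecting terms yields $[a,b]_{o'}=\{a,b\}-\{a,o'\}+\{o',o'\}-\{o',b\}+o'$, now with all operations those of $T_o\mathfrak{a}$.

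Next I would insert the bi-affine normal form $\{x,y\}=[x,y]_o+\lambda(x)+\mu(y)+s$ from \eqref{bullet.star}, where $\lambda(x)=\{x,o\}-\{o,o\}$, $\mu(y)=\{o,y\}-\{o,o\}$ and $s=\{o,o\}$. The crucial observation is that upon substitution the $\lambda$-, $\mu$- and $s$-contributions cancel in pairs, leaving $[a,b]_{o'}=[a,b]_o-[a,o']_o+[o',o']_o-[o',b]_o+o'$; bilinearity of $[-,-]_o$ then rewrites this as $[a,b]_{o'}=[a-o',b-o']_o+o'$. Equivalently, the translation $\phi\colon T_o\mathfrak{a}\to T_{o'}\mathfrak{a}$, $\phi(v)=\langle v,o,o'\rangle$, is a linear isomorphism intertwining the two brackets, i.e.\ $\phi([u,v]_o)=[\phi(u),\phi(v)]_{o'}$. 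Since a linear isomorphism carries the Leibniz rule \eqref{LeibRule} to the Leibniz rule, $[-,-]_o$ satisfies it if and only if $[-,-]_{o'}$ does.

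Finally I would apply the equivalence from the first part of the theorem at each base point separately: the linearization of $\Lambda$ at $o$ equals $o$ iff $(T_o\mathfrak{a},[-,-]_o)$ is a Leibniz algebra, and likewise at $o'$ the linearization equals $o'$ iff $(T_{o'}\mathfrak{a},[-,-]_{o'})$ is a Leibniz algebra; chaining these through the fibre isomorphism gives the desired implication. I expect the main obstacle to lie between the second and third steps: correctly expressing the $o'$-scalar action in $T_o\mathfrak{a}$ through property (v), and verifying the pairwise cancellation of the $\lambda$, $\mu$ and $s$ terms, since it is this cancellation --- rather than any special feature of the Leibnizian \eqref{Leibnizian} --- that makes the two fibre brackets genuinely isomorphic. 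Conceptually this reflects that $\widetilde{\Lambda}$ in \eqref{cond_gen} is precisely the top trilinear part of the tri-affine map $\Lambda$, and such a leading multilinear part is insensitive to the translation relating one fibre to another.
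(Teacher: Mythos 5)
Your proposal proves only the final (``Furthermore'') claim of the theorem and leaves the main equivalence unproved. Every step of your plan funnels through ``the equivalence just established in the first part of the theorem'' --- that $[-,-]$ makes $T_o\mathfrak{a}$ a Leibniz algebra if and only if $\widetilde{\Lambda}(a,b,c)=o$ --- but in a blind proof of this statement that equivalence is precisely what has to be established, and nothing in your proposal establishes it. It is also the computational heart of the theorem: in the operations of $T_o\mathfrak{a}$ one has $\Lambda(a,b,c)=\{a,\{b,c\}\}-\{\{a,b\},c\}+\{\{a,c\},b\}$, and the paper proves the equivalence by expanding the Leibniz defect $[[a,b],c]-[a,[b,c]]-[[a,c],b]$ through the definition $[x,y]=\{x,y\}-\{x,o\}+\{o,o\}-\{o,y\}$; the twenty-four resulting bi-affine terms regroup exactly into the eight values of $\Lambda$ that constitute $-\widetilde{\Lambda}(a,b,c)$, giving both directions of the iff at once. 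Without this step your argument is circular as a proof of the stated theorem: the iff is missing, and your own concluding step (applying that equivalence at $o$ and at $o'$) has nothing to stand on.

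For the part you do prove, your route coincides with the paper's. The paper likewise introduces the translation $\tau_o^u(a)=\langle a,o,u\rangle$, asserts (as ``one easily checks'') that it is a linear isomorphism intertwining the linearizations of $\{-,-\}$ at the two base points, and concludes by the first part applied at both points. Your proposal in fact supplies the verification the paper omits --- the transport of the $T_{o'}$-operations into $T_o\mathfrak{a}$, the pairwise cancellation of the $\lambda$-, $\mu$- and $s$-contributions, and the identity $[a,b]_{o'}=[a-o',b-o']_o+o'$ --- and those computations are correct. So the repair is clear: prepend the direct expansion proving the first equivalence; the remainder of your argument then stands essentially as written.
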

\begin{proof}
Note first that, in $T_{o}\mathfrak{a}$, the map $\Lambda$ \eqref{Leibnizian} takes the form:
\begin{align}\label{gen_Leib_tangent}
    \Lambda(a,b,c)=-\{\{a,b\},c\}+\{a,\{b,c\}\}+\{\{a,c\},b\}.
\end{align}
Rewriting the bilinear bracket $[-,-]$  in terms of the bi-affine bracket $\{-,-\}$ an then using \eqref{gen_Leib_tangent}, one easily finds that:
\begin{align*}
[[a,b],c]-[a,[b,c]]-[[a,c],b]=&\{\{a,b\},c\}-\{a,\{b,c\}\}-\{\{a,c\},b\}\;{\footnotesize\textnormal{($=-\Lambda(a,b,c)$)}}\\&-\{\{a,o\},c\}+\{a,\{o,c\}\}+\{\{a,c\},o\}\;{\footnotesize\textnormal{($=\Lambda(a,o,c)$)}}\\&+\{\{o,o\},c\}-\{o,\{o,c\}\}-\{\{o,c\},o\}\;{\footnotesize\textnormal{($=-\Lambda(o,o,c)$)}}\\&-\{\{o,b\},c\}+\{o,\{b,c\}\}+\{\{o,c\},b\}\;{\footnotesize\textnormal{($=\Lambda(o,b,c)$)}}\\&-\{\{a,b\},o\}+\{a,\{b,o\}\}+\{\{a,o\},b\}\;{\footnotesize\textnormal{($=\Lambda(a,b,o)$)}}\\&+\{\{a,o\},o\}-\{a,\{o,o\}\}-\{\{a,o\},o\}\;{\footnotesize\textnormal{($=-\Lambda(a,o,o)$)}}\\&-\{\{o,o\},o\}+\{o,\{o,o\}\}+\{\{o,o\},o\}\;{\footnotesize\textnormal{($=\Lambda(o,o,o)$)}}\\&+\{\{o,b\},o\}-\{o,\{b,o\}\}-\{\{o,o\},b\}\;{\footnotesize\textnormal{($=-\Lambda(o,b,o)$)}}\\=&-\widetilde{\Lambda}(a,b,c)
\end{align*}
for all $a,b,c\in \mathfrak{a}$. This proves the first statement of the theorem. 

Take now any other point $u$ and denote the linearization of $\{-,-\}$ at $u$ by $[-,-]'$. Then one easily checks that, for all $a,b \in \mathfrak{a}$,
$$
[a,b]' = \tau_o^u([\tau_u^o(a),\tau_u^o(b)]),
$$
where $\tau_o^u : T_o\mathfrak{a}\to T_u\mathfrak{a}$, $\tau_{o}^{u}(a) \coloneqq \langle a, o,u\rangle$, is the (translation) isomorphism of vector spaces. If $\tilde \Lambda$  vanishes, i.e., $[-,-]$ is a Leibniz bracket on $T_o\mathfrak{a}$,  then also $[-,-]'$ is a Leibniz bracket on $T_u\mathfrak{a}$, and hence then linearization of $\Lambda$ at $u$ vanishes by the first part of the theorem.
\end{proof}

\begin{definition}\label{Leib_def_varphi}
In the setup of Theorem~\ref{fibre_gen}, the map $\Lambda$ is called the {\bf Leibnizian} of $\{-,-\}$. The bracket $\{-,-\}$ is called an {\bf affine Leibniz bracket} if the linearization of its Leibnizian at a point $o\in \mathfrak{a}$ vanishes (and then it vanishes at any point $o\in\mathfrak{a}$ by Theorem \ref{fibre_gen}). In this case the pair $(\mathfrak{a},\{-,-\})$ is called a \textbf{Leibniz affgebra}. A \textbf{morphism of Leibniz affgebras} is an affine map preserving the affine Leibniz brackets.
\end{definition}

\begin{corollary}\label{cor.iso.fib}
  The fibres of a Leibniz affgebra are isomorphic as Leibniz algebras. 
\end{corollary}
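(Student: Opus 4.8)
The plan is to promote the translation isomorphism $\tau_o^u\colon T_o\mathfrak{a}\to T_u\mathfrak{a}$, $\tau_o^u(a)=\langle a,o,u\rangle$, already introduced in the proof of Theorem~\ref{fibre_gen}, from a vector-space isomorphism to an isomorphism of Leibniz algebras. By Remark~\ref{morph_aff_lin} any two fibres are isomorphic as vector spaces via such translations, so the only thing left to establish is that this map also preserves the linearized Leibniz brackets.

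The key ingredient is the identity already recorded in the proof of Theorem~\ref{fibre_gen}, namely $[a,b]'=\tau_o^u\big([\tau_u^o(a),\tau_u^o(b)]\big)$ for all $a,b\in\mathfrak{a}$, where $[-,-]$ and $[-,-]'$ denote the linearizations of $\{-,-\}$ at $o$ and at $u$ respectively. The first step is to note that $\tau_o^u$ and $\tau_u^o$ are mutually inverse: a short computation using heap associativity \eqref{h_asoc} and the Mal'cev identities \eqref{h_Malcev1}--\eqref{h_Malcev2} gives $\tau_u^o(\tau_o^u(a))=\langle\langle a,o,u\rangle,u,o\rangle=\langle a,o,o\rangle=a$, and symmetrically in the other order. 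I would then substitute $a=\tau_o^u(x)$ and $b=\tau_o^u(y)$ with $x,y\in T_o\mathfrak{a}$ into the recorded identity; since $\tau_u^o(a)=x$ and $\tau_u^o(b)=y$, it rearranges to $[\tau_o^u(x),\tau_o^u(y)]'=\tau_o^u([x,y])$, valid for all $x,y\in T_o\mathfrak{a}$ because $a,b$ range over all of $\mathfrak{a}$. This is precisely the assertion that $\tau_o^u$ preserves the Leibniz brackets.

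Finally, being a bijective (with inverse $\tau_u^o$) linear map that preserves the bracket, $\tau_o^u$ is an isomorphism of Leibniz algebras $T_o\mathfrak{a}\cong T_u\mathfrak{a}$, which proves the corollary.

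I do not expect a genuine obstacle here, since the decisive bracket identity was produced inside the proof of Theorem~\ref{fibre_gen}; the only points needing care are verifying that $\tau_o^u$ and $\tau_u^o$ are mutually inverse and that the reindexing substitution $a=\tau_o^u(x)$, $b=\tau_o^u(y)$ is performed correctly so that the homomorphism condition emerges in the standard form.
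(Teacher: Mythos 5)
Your proof is correct and takes essentially the same approach as the paper: both rely on the translation map $\tau_o^u$ and the bracket identity $[a,b]'=\tau_o^u\big([\tau_u^o(a),\tau_u^o(b)]\big)$ established in the proof of the second statement of Theorem~\ref{fibre_gen}. The paper's proof simply cites that identity, whereas you also spell out the routine verifications that $\tau_o^u$ and $\tau_u^o$ are mutually inverse and that the substitution yields the homomorphism condition.
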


\begin{proof}
    It follows from the proof of the second statament of Theorem~\ref{fibre_gen} that the translation map $\tau_o^u : T_o\mathfrak{a}\to T_u\mathfrak{a}$ is the required isomorphism of Leibniz algebras. 
\end{proof}
\begin{example}\label{ex.trivial}
    Let $\mathfrak{a}$ be an affine space. For any $\alpha \in \mathbb{F}$, the Leibnizian of the bracket $\{a,b\} \coloneqq \alpha \triangleright_a b$ is given by $\Lambda(a,b,c) = \alpha \triangleright_a b$. One easily checks that any linearization of $\Lambda$ vanishes, and so  $\{-,-\}$ is an affine Leibniz bracket.

    If $\sigma:\mathfrak{a}\to \mathfrak{a}$ is an affine map, then Leibnizian of the bracket $\{a,b\} \coloneqq \sigma(a)$ comes out as $\Lambda(a,b,c) = \sigma(a)$ and its linearizations vanish everywhere as needed for being an affine Leibniz bracket. Similarly, $\{a,b\} \coloneqq \sigma(b)$ is also an affine  Leibniz bracket with Leibnizian $\Lambda(a,b,c)=\langle\sigma^{2}(c),\sigma(c),\sigma(b)\rangle$.
\end{example}

\begin{theorem}\label{mainth_phi_gen}
Let $(\mathfrak{l},[-,-])$ be a Leibniz algebra and let $\lambda,\mu\colon \mathfrak{l}\rightarrow\mathfrak{l}$ be linear endomorphisms of $\mathfrak{l}$ and $s\in\mathfrak{l}$. View $\mathfrak{l}$ as the affine space $(\mathfrak{l},\langle -,-,-\rangle,-\triangleright_{-}-)$, where
\[\langle a,b,c\rangle\coloneqq a-b+c,\quad \alpha\triangleright_{a}b\coloneqq(1-\alpha)a+\alpha b\]
for all $a,b,c\in\mathfrak{l}$ and for all $\alpha\in\mathbb{F}$, and define the bi-affine bracket on $\mathfrak{l}$ by
\begin{equation}\label{th_affLeib_bracket}\{a,b\}\coloneqq[a,b]+\lambda(a)+\mu(b)+s\end{equation}
for all $a,b\in\mathfrak{l}$. The linearization at 0 of the Leibnizian $\Lambda$ of $\{-,-\}$  vanishes, i.e., $\{-,-\} $ is an affine Leibniz bracket, if and only if the following conditions hold for all $a,b,c\in\mathfrak{l}$,
\begin{subequations}\label{gen.Leibnizian}
    \begin{gather}
    \label{gen0}
    \Lambda(0,0,0) = \mu(s) +s\\
    \label{gen1}
\Lambda(a,0,0)-\Lambda(0,0,0)=[a,s]+\lambda(a),\\
   \label{gen2}
\Lambda(0,b,0)-\Lambda(0,0,0)=(\mu\lambda-\lambda\mu)(b)+\mu(b)+[s,b],\\
    \label{gen3}
\Lambda(0,0,c)-\Lambda(0,0,0)=\mu^{2}(c)-\mu(c)+\lambda\mu(c)-[s,c],\\
    \label{gen4}
\Lambda(a,b,0)-\Lambda(a,0,0)+\Lambda(0,0,0)-\Lambda(0,b,0)=[\lambda(a),b]+[a,\lambda(b)]-\lambda([a,b]),\\
    \label{gen5}
\Lambda(a,0,c)-\Lambda(a,0,0)+\Lambda(0,0,0)-\Lambda(0,0,c)=\lambda([a,c])+[a,\mu(c)]-[\lambda(a),c],\\
    \label{gen6}
\Lambda(0,b,c)-\Lambda(0,b,0)+\Lambda(0,0,0)-\Lambda(0,0,c)=\mu([b,c])+[\mu(c),b]-[\mu(b),c].
\end{gather}
\end{subequations}
 The resulting Leibniz affgebra  is denoted by $\mathfrak{a}(\mathfrak{l};\lambda,\mu,s)$. Under the previous conditions, for all $o\in\mathfrak{l}$, $T_{o}\mathfrak{a}(\mathfrak{l};\lambda,\mu,s)$ is a Leibniz algebra isomorphic to $\mathfrak{l}$.

Conversely, for any Leibniz affgebra $\mathfrak{a}$ and any point $o\in \mathfrak{a}$, there exist linear endomorphisms $\lambda,\mu$  of $T_{o}\mathfrak{a}$ and and element $s\in T_{o}\mathfrak{a}$ (necessarily satisfying conditions \eqref{gen.Leibnizian}) such that $\mathfrak{a}=\mathfrak{a}(T_{o}\mathfrak{a};\lambda,\mu,s).$ 
\end{theorem}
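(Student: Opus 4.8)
The plan is to handle the theorem's three assertions separately—the characterization \eqref{gen.Leibnizian}, the fibre isomorphism, and the converse—with essentially all of the work concentrated in one explicit expansion of the Leibnizian. For the characterization I would work entirely in the fibre $T_0\mathfrak{l}$, where by \eqref{gen_Leib_tangent} the Leibnizian reads $\Lambda(a,b,c)=\{a,\{b,c\}\}-\{\{a,b\},c\}+\{\{a,c\},b\}$. Substituting \eqref{th_affLeib_bracket} and expanding each of the three doubly-nested brackets by bilinearity of $[-,-]$ and linearity of $\lambda,\mu$ produces a sum that I would organise by multidegree in $(a,b,c)$. Since $\Lambda$ is tri-affine it decomposes uniquely into a constant part, three linear parts, three bilinear parts, and one trilinear part, each recovered by the standard difference operators (for instance $\Lambda(a,0,0)-\Lambda(0,0,0)$ isolates the $a$-linear part and $\Lambda(a,b,0)-\Lambda(a,0,0)-\Lambda(0,b,0)+\Lambda(0,0,0)$ the $(a,b)$-bilinear part). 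Reading these components off the expansion is exactly what yields the seven identities \eqref{gen0}–\eqref{gen6}.

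The decisive point is the top-degree component. After sorting, the trilinear part of $\Lambda$ is $[a,[b,c]]-[[a,b],c]+[[a,c],b]$, which by Theorem~\ref{fibre_gen} is precisely $-\widetilde{\Lambda}(a,b,c)$ and, by the Leibniz rule \eqref{LeibRule}, vanishes identically. One checks directly (by expanding $\widetilde{\Lambda}$ as the alternating sum of the eight values of $\Lambda$) that only this trilinear part survives, so $\widetilde{\Lambda}$ equals the Leibniz associator and the affine Leibniz condition $\widetilde{\Lambda}=0$ is equivalent to the Leibniz rule for $[-,-]$; the lower components \eqref{gen0}–\eqref{gen6}, which are unconstrained by $\widetilde{\Lambda}$, simultaneously record the Leibnizian in closed form. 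Since $\mathfrak{l}$ is assumed Leibniz, both the affine Leibniz property and \eqref{gen0}–\eqref{gen6} hold.

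For the fibre isomorphism I would compute the linearization of \eqref{th_affLeib_bracket} at $o=0$, obtaining $\{a,b\}-\{a,0\}+\{0,0\}-\{0,b\}=[a,b]$, so that $T_0\mathfrak{a}(\mathfrak{l};\lambda,\mu,s)$ is literally $(\mathfrak{l},[-,-])$; Corollary~\ref{cor.iso.fib} then upgrades this to every $o$. For the converse, fix $o\in\mathfrak{a}$ and pass to $T_o\mathfrak{a}$, which by Theorem~\ref{fibre_gen} is a Leibniz algebra under $[a,b]=\{a,b\}-\{a,o\}+\{o,o\}-\{o,b\}$. The key step is the bi-affine decomposition of $\{-,-\}$ in this fibre as in \eqref{bullet.star}: setting $s\coloneqq\{o,o\}$, $\lambda\coloneqq\{-,o\}-\{o,o\}$ and $\mu\coloneqq\{o,-\}-\{o,o\}$ (the linear parts of the one-variable affine restrictions), one has $\{a,b\}=[a,b]+\lambda(a)+\mu(b)+s$, i.e.\ the bilinear part of $\{-,-\}$ is exactly the linearized bracket. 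This is the form \eqref{th_affLeib_bracket} with $\mathfrak{l}=T_o\mathfrak{a}$, so $\mathfrak{a}=\mathfrak{a}(T_o\mathfrak{a};\lambda,\mu,s)$, and \eqref{gen.Leibnizian} then hold automatically by the forward direction.

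I expect the main obstacle to be purely computational: the expansion of the three nested brackets generates on the order of thirty terms, and correctly sorting them by multidegree—above all keeping straight the signs of the three summands of $\Lambda$—is where errors would creep in. The conceptual crux, by contrast, is clean, namely the recognition that the top-degree part is manifestly the Leibniz associator, so that the affine Leibniz property reduces to the Leibniz rule and nothing more while \eqref{gen0}–\eqref{gen6} are read off along the way. For the converse the only point needing care is verifying that a bi-affine map on an affine space does admit the stated decomposition in a fibre and that its bilinear part coincides with the fibre bracket $[-,-]$.
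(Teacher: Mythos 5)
Your proposal is correct and follows essentially the same route as the paper: you expand the Leibnizian of \eqref{th_affLeib_bracket} in the fibre at $0$ and sort by multidegree, so that the difference-operator combinations on the left-hand sides of \eqref{gen0}--\eqref{gen6} record the lower-order components while the full alternating sum $\widetilde{\Lambda}$ extracts exactly the trilinear part $[[a,c],b]-[[a,b],c]+[a,[b,c]]$, which vanishes by the Leibniz rule \eqref{LeibRule}; the converse statement, via the decomposition $\{a,b\}=[a,b]+\lambda(a)+\mu(b)+s$ with $s=\{o,o\}$, $\lambda=\{-,o\}-\{o,o\}$, $\mu=\{o,-\}-\{o,o\}$, is likewise the paper's own argument. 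Your only departures are cosmetic: you note that both sides of the equivalence hold outright under the standing Leibniz hypothesis (rather than proving two implications as the paper does), and for the fibre statement you identify $T_{0}\mathfrak{a}(\mathfrak{l};\lambda,\mu,s)$ literally with $(\mathfrak{l},[-,-])$ and invoke Corollary~\ref{cor.iso.fib} instead of writing out the translation isomorphism explicitly.
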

\begin{proof}
    Note first that in the affine space $\mathfrak{l}$ the Leibnizian takes the following form:
    \begin{equation}\label{Leibnizian_linear}
        \Lambda(a,b,c)=\{\{a,c\},b\}-\{\{a,b\},c\}+\{a,\{b,c\}\}.
    \end{equation}
    
    Assume that the linearisation at $0$ of the Leibnizian $\Lambda$ of $\{-,-\} $ vanishes. From \eqref{Leibnizian_linear} and using definition \eqref{th_affLeib_bracket} of $\{-,-\}$, the following is obtained:
    \begin{enumerate}
\item[(i)] Setting $a=b=c=0$ we obtain condition \eqref{gen0}.
        \item[(ii)] Setting $b=c=0$  we obtain condition \eqref{gen1}.
        \item[(iii)] Setting $a=c=0$ we obtain \eqref{gen2}.
        \item[(iv)] Setting $a=b=0$ we obtain \eqref{gen3}.
        \item[(v)] Setting $c=0$ and using \eqref{gen1} and \eqref{gen2} we obtain\eqref{gen4}.
    \end{enumerate}    
    Similarly, setting appropriate arguments equal to $0$ we obtain conditions \eqref{gen5} and \eqref{gen6}.
    
    Conversely, we need to show that Leibnizian $\Lambda$ vanishes at $0$ if conditions \eqref{gen.Leibnizian} hold. On the one hand, by using \eqref{Leibnizian_linear} and \eqref{th_affLeib_bracket}, we obtain that
    \begin{align*}
        \Lambda(a,b,c)&=[\lambda(a),b]+[a,\lambda(b)]-\lambda([a,b])\; {\footnotesize\textnormal{($=\Lambda(a,b,0)-\Lambda(a,0,0)+\Lambda(0,0,0)-\Lambda(0,b,0)$)}}\\
        &+\lambda([a,c])+[a,\mu(c)]-[\lambda(a),c] \;{\footnotesize\textnormal{($=\Lambda(a,0,c)- \Lambda(a,0,0)+ \Lambda(0,0,0)- \Lambda(0,0,c)$)}} \\
        &+\mu([b,c])+[\mu(c),b]-[\mu(b),c] \; {\footnotesize\textnormal{($= \Lambda(0,b,c)-\Lambda(0,b,0)+\Lambda(0,0,0)-\Lambda(0,0,c)$)}}\\
        &+[a,s]+\lambda(a)\; {\footnotesize\textnormal{($= \Lambda(a,0,0)-\Lambda(0,0,0)$)}}\\
        &+(\mu\lambda-\lambda\mu)(b)+\mu(b)+[s,b]\; {\footnotesize\textnormal{ ($=\Lambda(0,b,0)-\Lambda(0,0,0)$)}}\\
        &+\mu^2(c)-\mu(c)+\lambda\mu(c)-[s,c]\; {\footnotesize\textnormal{ ($= \Lambda(0,0,c)-\Lambda(0,0,0)$)}}\\
        &+\mu(s)+s\; {\footnotesize\textnormal{($= \Lambda(0,0,0)$)},}
    \end{align*}
    while, on the other hand, by adding and subtracting the appropriate terms, we deduce that
    \begin{align*}
        {\widetilde{\Lambda}(a,b,c)} &=\Lambda(a,b,c)\\
    &-(\Lambda(a,b,0)-\Lambda(a,0,0)+\Lambda(0,0,0)-\Lambda(0,b,0))\\
    &-(\Lambda(a,0,c)- \Lambda(a,0,0)+ \Lambda(0,0,0)- \Lambda(0,0,c))\\
    &-(\Lambda(0,b,c)-\Lambda(0,b,0)+\Lambda(0,0,0)-\Lambda(0,0,c))\\
    &-(\Lambda(a,0,0)-\Lambda(0,0,0))\\
    &-(\Lambda(0,b,0)-\Lambda(0,0,0))\\
    &-(\Lambda(0,0,c)-\Lambda(0,0,0))\\
    &-\Lambda(0,0,0).
    \end{align*}
    As a consequence, we obtain that ${\widetilde{\Lambda}(a,b,c)}=0$. 

    If conditions \eqref{gen.Leibnizian} hold, then $T_{o}\mathfrak{a}(\mathfrak{l};\lambda,\mu,s)$ is a Leibniz algebra for all $o\in\mathfrak{l}$ by Theorem \ref{fibre_gen} with bracket given by
    \[[a,b]'=[a,b]-[a,o]+[o,o]-[o,b]+o.\]
    The isomorphism of vector spaces
    \[\varphi\colon T_{o}\mathfrak{a}(\mathfrak{l};\lambda,\mu,s)\rightarrow \mathfrak{l}\qquad a\mapsto\varphi(a)\coloneqq a-o\]
    is a Leibniz algebra morphism, i.e. $\varphi([a,b]')=[\varphi(a),\varphi(b)]$; the proof of this statement is straightforward. Then, all fibres of $\mathfrak{a}(\mathfrak{l};\lambda,\mu,s)$ are mutually isomorphic and also isomorphic to $\mathfrak{l}$ as Leibniz algebras.

    Let us prove the last statement of the theorem. Let $\mathfrak{a}$ be a Leibniz affgebra and $o\in\mathfrak{a}$. By Theorem \ref{fibre_gen}, $T_{o}\mathfrak{a}$ is a Leibniz algebra with bracket defined by $[a,b]=\{a,b\}-\{a,o\}+\{o,o\}-\{o,b\}$ for all $a,b\in \mathfrak{a}$. Then, 
    \begin{align*}
        \{a,b\}=&[a,b]+\{a,o\}-\{o,o\}+\{o,b\}\\=&[a,b]+\lambda(a)+\mu(b)+s,
    \end{align*}
    where $\lambda(a)\coloneqq\{a,o\}-\{o,o\}$ and $\mu(b)\coloneqq\{o,b\}-\{o,o\}$ for all $a,b\in\mathfrak{a}$, and $s\coloneqq \{o,o\}$. Note that the maps $\{-,o\}$ and $\{o,-\}$ are affine maps from $\mathfrak{a}$ to $\mathfrak{a}$, and hence $\lambda$ and $\mu$ are linear endomorphisms of $T_{o}\mathfrak{a}$. Thus, by  construction, $\mathfrak{a}=\mathfrak{a}(T_{o}\mathfrak{a};\lambda,\mu,s)$ because both have the same affine Leibniz bracket.
\end{proof}

Theorem~\ref{mainth_phi_gen} describes all Leibniz affgebras with a prescribed Leibniz algebra fibre and all Leibniz affgebras arise in this way. Although at this point Theorem~\ref{mainth_phi_gen} is a rewriting of Theorem~\ref{fibre_gen} it becomes important and indeed crucial when it comes to studying Leibniz affgebras with a Leibnizian of a prescribed form (to which we will refer as {\bf type}); see Section~\ref{sec.types}. Then equations \eqref{gen.Leibnizian} become constraints on the form of $\lambda$, $\mu$ and $s$.

 \begin{example}[One-dimensional Leibniz affgebras] 
 \label{ex.one.dim}
     Let $\mathfrak{l}$ be a one-dimensional Leibniz algebra. Necessarily it is abelian, i.e., $[a,b]=0$ for all $a,b\in \mathfrak{l}$ and a general candidate for an affine Leibniz bracket on $\mathfrak{l}$ is of the form
     \begin{equation}\label{one.dim}
        \{a,b\} = \lambda(a)+\mu(b) + s. 
     \end{equation}
     The Leibnizian comes out as
     $$
     \Lambda(a,b,c) = \lambda(a) + \mu(b-c+s + \lambda(c) + \mu(c)) +s,
     $$
     where we have already used that any two endomorphisms of a one-dimensional vector space commute. One easily checks that this Leibnizian satisfies all conditions in Theorem~\ref{mainth_phi_gen} and all one-dimensional Leibniz affgebras arise in this form \eqref{one.dim}. Note also that each of these Leibniz affgebras is also a Lie affgebra.
 \end{example}

In the following results, we aim to characterize the homomorphisms between Leibniz affgebras $\mathfrak{a}(\mathfrak{l};\lambda,\mu,s)$. 

\begin{theorem}\label{thm.hom}
Let $(\mathfrak{l},[-,-])$ and  $(\mathfrak{l'},[-,-]')$ be Leibniz algebras and $\mathfrak{a}(\mathfrak{l};\lambda,\mu,s)$ and $\mathfrak{a}(\mathfrak{l}';\lambda',\mu',s')$ the corresponding Leibniz affgebras as in Theorem~\ref{mainth_phi_gen}. There exists a Leibniz affgebra homomorphism 
\[\varphi\colon \mathfrak{a}(\mathfrak{l};\lambda,\mu,s)\rightarrow \mathfrak{a}(\mathfrak{l}';\lambda',\mu',s')\]
if and only if there exist a Leibniz algebra morphism 
$\psi\colon\mathfrak{l}\rightarrow \mathfrak{l}'$ 
and a point $q'\in\mathfrak{l}'$ satisfying the following conditions:
\begin{subequations}\label{char_iso_conditions}
\begin{gather}\label{cond1_mor_Leib}
\psi(s)=s'+(\lambda'+\mu')(q')+\operatorname{ad}'_{q'}(q')-q',
\\\label{cond2_mor_Leib}
\psi\mu(a)=[q',\psi(a)]'+\mu'\psi(a),
\\\label{cond3_mor_Leib}
\psi\lambda(a)=(\operatorname{ad}'_{q'}+\lambda')\psi(a),
\end{gather}
\end{subequations}
for all $a\in\mathfrak{l}$.
\end{theorem}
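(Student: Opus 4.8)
The plan is to use the structure theorem for affine maps to reduce the bracket-preservation condition to a finite set of identities obtained by separating homogeneous components in the two variables.

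First I would record that, by Remark~\ref{morph_aff_lin} applied with the origin $0\in\mathfrak{l}$, every affine map $\varphi\colon\mathfrak{l}\to\mathfrak{l}'$ is of the form $\varphi(a)=\psi(a)+q'$, where $\psi\coloneqq\overrightarrow{\varphi}$ is the induced linear map $T_0\mathfrak{l}\to T_{q'}\mathfrak{l}'$ (equivalently, a linear map $\mathfrak{l}\to\mathfrak{l}'$ under the canonical identification of every fibre of the standard affine structure with the underlying vector space) and $q'\coloneqq\varphi(0)$. Thus a Leibniz affgebra homomorphism is precisely such a pair $(\psi,q')$ for which $\varphi$ additionally preserves the affine Leibniz brackets, i.e.
\[
\varphi(\{a,b\})=\{\varphi(a),\varphi(b)\}'\qquad\text{for all }a,b\in\mathfrak{l}.
\]

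Next I would expand both sides using \eqref{th_affLeib_bracket}, the form $\varphi(x)=\psi(x)+q'$, and bilinearity of $[-,-]'$. The left-hand side becomes $\psi([a,b])+\psi\lambda(a)+\psi\mu(b)+\psi(s)+q'$, while the right-hand side becomes
\[
[\psi(a),\psi(b)]'+\big([\psi(a),q']'+\lambda'\psi(a)\big)+\big([q',\psi(b)]'+\mu'\psi(b)\big)+\big([q',q']'+\lambda'(q')+\mu'(q')+s'\big).
\]
Both expressions are bi-affine in $(a,b)$, so each decomposes uniquely into its bilinear part, its part linear in $a$ alone, its part linear in $b$ alone, and its constant part. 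The key (and essentially only) step is to invoke this uniqueness — equivalently, to specialise at $a=b=0$, then at $b=0$, then at $a=0$, and finally at generic $(a,b)$, exactly as in the linearisation arguments behind Theorems~\ref{fibre_gen} and~\ref{mainth_phi_gen} — in order to conclude that $\varphi$ preserves the brackets if and only if the four components match separately.

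Matching the bilinear parts yields $\psi([a,b])=[\psi(a),\psi(b)]'$, i.e. $\psi$ is a Leibniz algebra morphism. Matching the parts linear in $a$ gives $\psi\lambda(a)=[\psi(a),q']'+\lambda'\psi(a)=(\operatorname{ad}'_{q'}+\lambda')\psi(a)$, which is \eqref{cond3_mor_Leib}; matching the parts linear in $b$ gives $\psi\mu(a)=[q',\psi(a)]'+\mu'\psi(a)$, which is \eqref{cond2_mor_Leib}; and matching the constant parts gives $\psi(s)+q'=[q',q']'+\lambda'(q')+\mu'(q')+s'$, which rearranges to \eqref{cond1_mor_Leib} upon recalling that $\operatorname{ad}'_{q'}(q')=[q',q']'$. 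This settles the forward implication. For the converse I would read the same computation backwards: given a Leibniz morphism $\psi$ and a point $q'$ satisfying \eqref{cond1_mor_Leib}--\eqref{cond3_mor_Leib}, the map $\varphi(a)\coloneqq\psi(a)+q'$ is affine by construction, and the four matched identities reassemble into the single equality $\varphi(\{a,b\})=\{\varphi(a),\varphi(b)\}'$, so $\varphi$ is a Leibniz affgebra homomorphism. I do not expect a genuine obstacle here; the only points requiring care are the correct identification of the homogeneous components and the sign/placement convention $\operatorname{ad}'_{q'}=[-,q']'$, which is exactly what makes the constant-term identity appear in the slightly rearranged shape \eqref{cond1_mor_Leib}.
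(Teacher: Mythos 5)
Your proposal is correct and follows essentially the same route as the paper's proof: both write $\varphi=\psi+q'$ with $\psi$ linear and $q'=\varphi(0)$, expand the bracket-preservation identity via \eqref{th_affLeib_bracket}, and extract \eqref{cond1_mor_Leib}--\eqref{cond3_mor_Leib} together with the morphism property of $\psi$ by specialising at $a=b=0$, $b=0$, $a=0$ and generic $(a,b)$ (which is exactly your unique-decomposition-of-bi-affine-maps argument), then reverse the computation for the converse. The identifications of the homogeneous components, and the convention $\operatorname{ad}'_{q'}=[-,q']'$ giving the constant term in the form \eqref{cond1_mor_Leib}, all match the paper.
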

\begin{proof}
Assume first that there exists a Leibniz affgebra homomorphism $\varphi\colon \mathfrak{a}(\mathfrak{l};\lambda,\mu,s)\rightarrow \mathfrak{a}(\mathfrak{l}';\lambda',\mu',s')$. Since $\varphi$ is an affine morphism, it induces a linear morphism $$\psi\colon\mathfrak{l}\rightarrow\mathfrak{l}'\qquad a\mapsto\psi(a)\coloneqq\varphi(a)-\varphi(0).$$  Set $q'\coloneqq \varphi(0)$, then $\varphi=\psi+q'$. Since $\varphi$ is a Leibniz affgebra homomorphism, it preserves the affine Leibniz bracket. This  implies the following chain of equivalences:
\begin{align}\label{eq9}
&\varphi(\{a,b\})=\{\varphi(a),\varphi(b)\}'\\\nonumber\iff&\psi(\{a,b\})+q'=\{\psi(a)+q',\psi(b)+q'\}'\\\nonumber\iff&\psi([a,b])+\psi\lambda(a)+\psi\mu(b)+\psi(s)+q'=[\psi(a),\psi(b)]'+[\psi(a),q']'+[q',\psi(b)]'\\\nonumber&+[q',q']'+\lambda'\psi(a)+\lambda'(q')+\mu'\psi(b)+\mu'(q')+s'.
\end{align}
By setting $a=0=b$ in \eqref{eq9} and using the linearity of the maps involved, equality  \eqref{cond1_mor_Leib} is obtained. If we replace in \eqref{eq9} the value of $\psi(s)$ obtained in \eqref{cond1_mor_Leib}, then 
\begin{align}\label{eq10}
\psi([a,b])+\psi\lambda(a)+\psi\mu(b)=[\psi(a),\psi(b)]'+[\psi(a),q']'+[q',\psi(b)]'+\lambda'\psi(a)+\mu'\psi(b).
\end{align}
By setting $a=0$ and $b=0$ in \eqref{eq10} we obtain equalities \eqref{cond2_mor_Leib} and \eqref{cond3_mor_Leib}, respectively. The fact that $\psi$ preserves the Leibniz bracket follows by using \eqref{cond2_mor_Leib} and \eqref{cond3_mor_Leib} in \eqref{eq10}.

Conversely, assume that there exist  a Leibniz algebra morphism $\psi\colon \mathfrak{l}\rightarrow \mathfrak{l}'$ and a point $q'\in\mathfrak{l}'$ such that conditions \eqref{char_iso_conditions} hold. Define $\varphi(a)=\psi(a)+q'$ and let us confirm that $\varphi$ is a Leibniz affgebra homomorphism between $\mathfrak{a}(\mathfrak{l};\lambda,\mu,s)$ and $\mathfrak{a}(\mathfrak{l}';\lambda',\mu',s')$. Since $\psi$ is a linear map, it follows that $\varphi$ is an affine homomorphism. 
Furthermore, for all $a,b\in \mathfrak{l}$,
$$
\begin{aligned}
    \varphi(\{a,b\}) &=\psi([a,b])+\psi\lambda(a)+\psi\mu(b)+\psi(s)+q'
    \\ &=[\psi(a),\psi(b)]'+[\psi(a),q']'+\lambda'\psi(a)+[q',\psi(b)]'+\mu'\psi(b)+[q',q']'+\lambda'(q')+\mu'(q')+s'\\&=\{\varphi(a),\varphi(b)\}',
\end{aligned}
$$
where the second equality follows by the fact that $\psi$ preserves the Leibniz bracket and equalities \eqref{char_iso_conditions}.
\end{proof}
\begin{corollary}\label{iso_charac}
Leibniz affgebras $\mathfrak{a}(\mathfrak{l};\lambda,\mu,s)$ and $\mathfrak{a}(\mathfrak{l}';\lambda',\mu',s')$ are mutually isomorphic if and only if there exist an isomorphism of Leibniz algebras $\psi\colon \mathfrak{l}\rightarrow\mathfrak{l}'$ and a point $q\in\mathfrak{l}$ satisfying the following conditions:
\begin{subequations}\label{char_iso_conditions2}
\begin{gather}
\label{cond1_iso_Leib}
s'=\psi(s-(\lambda+\mu)(q)+\operatorname{ad}_{q}(q)+q),
\\\label{cond2_iso_Leib}
\mu'=\psi(\mu-[q,-])\psi^{-1},
\\\label{cond3_iso_Leib}
\lambda'=\psi(\lambda-\operatorname{ad}_{q})\psi^{-1}.
\end{gather}
\end{subequations}
\end{corollary}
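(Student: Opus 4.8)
The plan is to deduce this corollary directly from Theorem~\ref{thm.hom} by specialising to the case of a \emph{bijective} homomorphism and then rewriting the conditions \eqref{char_iso_conditions} in terms of a point of $\mathfrak{l}$ rather than of $\mathfrak{l}'$. First I would observe that a homomorphism $\varphi=\psi+q'$ of affgebras is an isomorphism precisely when it is bijective, since the inverse of a bijective affine bracket-preserving map is again affine and bracket-preserving; and because translation by $q'$ is a bijection, $\varphi$ is bijective if and only if the underlying linear map $\psi$ is bijective. A bijective Leibniz algebra morphism is automatically a Leibniz algebra isomorphism, so by Theorem~\ref{thm.hom} the two affgebras are isomorphic if and only if there exist a Leibniz algebra isomorphism $\psi\colon\mathfrak{l}\to\mathfrak{l}'$ and a point $q'\in\mathfrak{l}'$ satisfying \eqref{char_iso_conditions}. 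It then remains only to recast those three conditions.

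Since $\psi$ is now invertible, I would set $q\coloneqq\psi^{-1}(q')$, so that $q'=\psi(q)$ and points of $\mathfrak{l}'$ correspond bijectively to points of $\mathfrak{l}$. For \eqref{cond3_mor_Leib}, writing $a=\psi^{-1}(b)$ for arbitrary $b\in\mathfrak{l}'$ and using that $\psi$ intertwines the brackets (so that $\psi\operatorname{ad}_q\psi^{-1}(b)=\psi[\psi^{-1}(b),q]=[b,\psi(q)]'=\operatorname{ad}'_{q'}(b)$), the condition becomes $\lambda'=\psi\lambda\psi^{-1}-\operatorname{ad}'_{q'}=\psi(\lambda-\operatorname{ad}_q)\psi^{-1}$, which is exactly \eqref{cond3_iso_Leib}. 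An entirely analogous manipulation of \eqref{cond2_mor_Leib}, this time using $\psi[q,\psi^{-1}(b)]=[\psi(q),b]'=[q',b]'$, yields \eqref{cond2_iso_Leib}.

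The remaining equation \eqref{cond1_iso_Leib} is where the only real work lies, and I expect it to be the main obstacle. Starting from \eqref{cond1_mor_Leib} I would solve for $s'$ and apply $\psi^{-1}$; but to match the stated form one must eliminate $\lambda',\mu'$ using the two conditions just proved and exploit the morphism property on $\operatorname{ad}_q(q)=[q,q]$, namely $\psi[q,q]=[q',q']'=\operatorname{ad}'_{q'}(q')$. Substituting $\lambda'=\psi\lambda\psi^{-1}-\operatorname{ad}'_{q'}$ and $\mu'=\psi\mu\psi^{-1}-[q',-]'$ into $(\lambda'+\mu')(q')$ produces $\psi(\lambda+\mu)(q)-2\,[q',q']'$; combining this with the separate $-\operatorname{ad}'_{q'}(q')=-[q',q']'$ term leaves a single summand $[q',q']'=\psi\operatorname{ad}_q(q)$, after which applying $\psi^{-1}$ and factoring gives $s'=\psi\big(s-(\lambda+\mu)(q)+\operatorname{ad}_q(q)+q\big)$, i.e.\ \eqref{cond1_iso_Leib}. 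Finally I would note that every manipulation above is reversible, so that any pair $(\psi,q)$ with $\psi$ a Leibniz algebra isomorphism satisfying \eqref{char_iso_conditions2} produces a pair $(\psi,q'=\psi(q))$ satisfying \eqref{char_iso_conditions}; by Theorem~\ref{thm.hom} and the bijectivity observation this delivers the converse implication and completes the equivalence.
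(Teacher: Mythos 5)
Your proposal is correct and takes essentially the same route as the paper's proof: both specialise Theorem~\ref{thm.hom} to bijective morphisms, set $q=\psi^{-1}(q')$, and recast conditions \eqref{char_iso_conditions} via the intertwining identities $\operatorname{ad}'_{q'}=\psi\operatorname{ad}_{q}\psi^{-1}$ and $[q',-]'=\psi[q,-]\psi^{-1}$. The only difference is that you spell out the computation for $s'$ and the observation that a bijective affgebra homomorphism is an isomorphism, details which the paper leaves implicit.
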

\begin{proof}
If $\varphi\colon \mathfrak{a}(\mathfrak{l};\lambda,\mu,s)\rightarrow \mathfrak{a}(\mathfrak{l}';\lambda',\mu',s')$ is an isomorphism of Leibniz affgebras, then there exists  an isomorphism of Leibniz algebras $\psi(a)=\varphi(a)-q'\colon\mathfrak{l}\rightarrow\mathfrak{l}'$, where $q'=\varphi(0)$, satisfying conditions \eqref{char_iso_conditions} (note that the inverse of $\psi$ is given by $\psi^{-1}(x)\coloneqq\varphi^{-1}(x+q')$ for all $x\in\mathfrak{l}'$). Setting $q\coloneqq\psi^{-1}(q')\in\mathfrak{l}$ and taking into account that 
\begin{gather*}\label{eq11}
\operatorname{ad}'_{q'}=\psi \operatorname{ad}_{q}\psi^{-1},\quad [q',-]'=\psi [q,-]\psi^{-1},
\end{gather*}
we quickly arrive at \eqref{char_iso_conditions2} as a reformulation of \eqref{char_iso_conditions}.
\end{proof}

\begin{corollary}
Let $\mathfrak{a}$ and $\mathfrak{b}$ be Leibniz affgebras. If their fibres are non-isomorphic as Leibniz algebras, then $\mathfrak{a}$ and $\mathfrak{b}$ are non-isomorphic.
\end{corollary}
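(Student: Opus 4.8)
The plan is to prove the contrapositive: assuming that $\mathfrak{a}$ and $\mathfrak{b}$ are isomorphic as Leibniz affgebras, I will produce an isomorphism of Leibniz algebras between a fibre of $\mathfrak{a}$ and a fibre of $\mathfrak{b}$. Since, by Corollary~\ref{cor.iso.fib}, all fibres of a single Leibniz affgebra are mutually isomorphic as Leibniz algebras, the phrase ``their fibres'' is unambiguous up to isomorphism, and producing a single such isomorphism suffices.

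First I would fix base points $o\in\mathfrak{a}$ and $o'\in\mathfrak{b}$ and invoke Theorem~\ref{mainth_phi_gen} to realise the two affgebras concretely as $\mathfrak{a}=\mathfrak{a}(T_{o}\mathfrak{a};\lambda,\mu,s)$ and $\mathfrak{b}=\mathfrak{a}(T_{o'}\mathfrak{b};\lambda',\mu',s')$, where $\mathfrak{l}\coloneqq T_{o}\mathfrak{a}$ and $\mathfrak{l}'\coloneqq T_{o'}\mathfrak{b}$ are the corresponding fibre Leibniz algebras and the data $(\lambda,\mu,s)$, $(\lambda',\mu',s')$ necessarily satisfy conditions~\eqref{gen.Leibnizian}. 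This step is precisely the last assertion of Theorem~\ref{mainth_phi_gen}, namely that every Leibniz affgebra arises from any one of its fibres.

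Next, an isomorphism $\mathfrak{a}\cong\mathfrak{b}$ of Leibniz affgebras is exactly an isomorphism $\mathfrak{a}(\mathfrak{l};\lambda,\mu,s)\cong\mathfrak{a}(\mathfrak{l}';\lambda',\mu',s')$, so Corollary~\ref{iso_charac} applies directly: its characterisation yields an isomorphism of Leibniz algebras $\psi\colon\mathfrak{l}\rightarrow\mathfrak{l}'$ (together with a point $q$ and the compatibility relations~\eqref{char_iso_conditions2}, none of which I need beyond the bare existence of $\psi$). Hence $T_{o}\mathfrak{a}\cong T_{o'}\mathfrak{b}$ as Leibniz algebras. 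Combining this with Corollary~\ref{cor.iso.fib}, every fibre of $\mathfrak{a}$ is isomorphic to every fibre of $\mathfrak{b}$. Taking the contrapositive gives the statement: if the fibres are non-isomorphic as Leibniz algebras, then $\mathfrak{a}$ and $\mathfrak{b}$ cannot be isomorphic.

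There is essentially no analytic obstacle here, since the content has already been packaged into Corollaries~\ref{cor.iso.fib} and~\ref{iso_charac}; the only point requiring care is the bookkeeping ensuring that the fibre appearing in Theorem~\ref{mainth_phi_gen} is literally the Leibniz algebra to which Corollary~\ref{iso_charac} attaches $\psi$, and that the choice of base points $o,o'$ is immaterial, both guaranteed by Corollary~\ref{cor.iso.fib}. An alternative, more self-contained route avoids Theorem~\ref{mainth_phi_gen} altogether: given an affgebra isomorphism $\varphi\colon\mathfrak{a}\to\mathfrak{b}$, its induced linear part $\overrightarrow{\varphi}\colon T_{o}\mathfrak{a}\to T_{\varphi(o)}\mathfrak{b}$ from Remark~\ref{morph_aff_lin} is already a bijection, and one verifies directly, by writing the fibre bracket $[a,b]=\{a,b\}-\{a,o\}+\{o,o\}-\{o,b\}$ through the heap operation $\langle-,-,-\rangle$ and using that $\varphi$ is a heap morphism preserving $\{-,-\}$, that $\overrightarrow{\varphi}$ intertwines the two fibre brackets; this is the same computation that underlies Corollary~\ref{iso_charac}.
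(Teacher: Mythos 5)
Your proposal is correct and follows essentially the same route as the paper: realise both affgebras as $\mathfrak{a}(T_{o}\mathfrak{a};\lambda,\mu,s)$ and $\mathfrak{a}(T_{u}\mathfrak{b};\lambda',\mu',s')$ via Theorem~\ref{mainth_phi_gen}, then apply the isomorphism characterisation (Corollary~\ref{iso_charac}) to extract a Leibniz algebra isomorphism between the fibres, and conclude by contraposition. Your extra remarks on base-point independence via Corollary~\ref{cor.iso.fib} and the alternative direct computation are sound but not needed beyond what the paper already does.
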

\begin{proof}
By Theorem~\ref{mainth_phi_gen}, for a point $o\in\mathfrak{a}$ and a point $u\in\mathfrak{b}$, there exist  linear endomorphisms $\lambda,\mu$ of $T_{o}\mathfrak{a}$, and an element $s\in T_{o}\mathfrak{a}$, $\lambda',\mu'$ linear endomorphisms in $T_{u}\mathfrak{b}$ and $s'\in T_{u}\mathfrak{b}$ such that
\[\mathfrak{a}=\mathfrak{a}(T_{o}\mathfrak{a};\lambda,\mu,s)\textnormal{ and }\mathfrak{b}=\mathfrak{a}(T_{u}\mathfrak{b};\lambda',\mu',s').\]
By the previous theorem, if $\mathfrak{a}(T_{o}\mathfrak{a};\lambda,\mu,s)$ and $\mathfrak{a}(T_{u}\mathfrak{b};\lambda',\mu',s')$ are isomorphic, then there exists a Leibniz algebra isomorphism $\psi\colon T_{o}\mathfrak{a}\rightarrow T_{u}\mathfrak{b}$ as required.
\end{proof}

The next proposition provides one with a characterization of subaffgebras of a Leibniz affgebra $\mathfrak{a}(\mathfrak{l};\lambda,\mu,s)$.
\begin{proposition} Let $\mathfrak{a}(\mathfrak{l};\lambda,\mu,s)$ be a Leibniz affgebra and let $\mathfrak{b}$ be an affine subspace of $\mathfrak{a}(\mathfrak{l};\lambda,\mu,s)$.  The following assertions are equivalent:
\begin{enumerate}
\item[(i)] $\mathfrak{b}$ is a Leibniz subaffgebra of $\mathfrak{a}(\mathfrak{l};\lambda,\mu,s)$,
\item[(ii)] $\mathfrak{b}=a+\mathfrak{h}$ with $\mathfrak{h}$ a Leibniz subalgebra of $\mathfrak{l}$ such that the following conditions hold:
\begin{subequations}\label{cond_subaff}
\begin{gather}
\label{cond1_subaff_Leib} [a,a]-a+(\lambda+\mu)(a)+s\in\mathfrak{h},
\\\label{cond2_subaff_Leib} (\lambda+\operatorname{ad}_{a})(\mathfrak{h})\subseteq \mathfrak{h},
\\\label{cond3_subaff_Leib} (\mu+[a,-])(\mathfrak{h})\subseteq \mathfrak{h}.
\end{gather}
\end{subequations}
\end{enumerate}

Furthermore, under the previous conditions, $\mathfrak{b}=a+\mathfrak{h}$ is a Leibniz affgebra isomorphic to $$\mathfrak{a}(\mathfrak{h};\lambda+\operatorname{ad}_{a},\mu+[a,-],[a,a]-a+(\lambda+\mu)(a)+s).$$
\end{proposition}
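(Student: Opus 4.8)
The plan is to exploit the fact that $\mathfrak{l}$ carries its standard affine structure, so that every affine subspace of $\mathfrak{a}(\mathfrak{l};\lambda,\mu,s)$ has the form $\mathfrak{b}=a+\mathfrak{h}$ for some $a\in\mathfrak{l}$ and some linear subspace $\mathfrak{h}\subseteq\mathfrak{l}$. Being a Leibniz subaffgebra then means precisely that $\mathfrak{b}$ is closed under the affine Leibniz bracket, i.e.\ $\{x,y\}\in\mathfrak{b}$ for all $x,y\in\mathfrak{b}$, equivalently $\{x,y\}-a\in\mathfrak{h}$. The whole argument rests on a single expansion: writing $x=a+h_1$, $y=a+h_2$ with $h_1,h_2\in\mathfrak{h}$ and using the definition \eqref{th_affLeib_bracket} together with the bilinearity of $[-,-]$ and the linearity of $\lambda,\mu$, I would regroup the terms of $\{a+h_1,a+h_2\}-a$ into the four blocks
\begin{equation*}
\big([a,a]-a+(\lambda+\mu)(a)+s\big)\;+\;[h_1,h_2]\;+\;(\lambda+\operatorname{ad}_{a})(h_1)\;+\;(\mu+[a,-])(h_2),
\end{equation*}
where I use $\operatorname{ad}_a=[-,a]$, so that $[h_1,a]=\operatorname{ad}_a(h_1)$ and $[a,h_2]=[a,-](h_2)$.

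For the equivalence (i)$\,\Leftrightarrow\,$(ii) I would read off the conditions by specialization. Setting $h_1=h_2=0$ forces the constant block $[a,a]-a+(\lambda+\mu)(a)+s$ into $\mathfrak{h}$, giving \eqref{cond1_subaff_Leib}. Granted that, setting $h_2=0$ (resp.\ $h_1=0$) isolates $(\lambda+\operatorname{ad}_a)(h_1)\in\mathfrak{h}$ (resp.\ $(\mu+[a,-])(h_2)\in\mathfrak{h}$), which are \eqref{cond2_subaff_Leib} and \eqref{cond3_subaff_Leib}. Finally, once these three hold, the general expansion lies in $\mathfrak{h}$ if and only if the remaining block $[h_1,h_2]$ does, which is exactly the statement that $\mathfrak{h}$ is a Leibniz subalgebra. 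The converse is the same computation read backwards: if $\mathfrak{h}$ is a subalgebra and \eqref{cond_subaff} holds, then all four blocks lie in $\mathfrak{h}$ and hence so does their sum. The main (though purely mechanical) effort I anticipate is the careful bookkeeping of signs in the regrouping and the verification that the general case imposes no constraint beyond the three specializations plus subalgebra closure.

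For the \textbf{Furthermore} part I would proceed in two short steps. First, I would check that $\mathfrak{b}$ is a Leibniz affgebra in its own right: the restriction $\{-,-\}|_{\mathfrak{b}}$ is bi-affine and its Leibnizian, defined via \eqref{Leibnizian}, is merely the restriction of $\Lambda$; since $\{-,-\}$ is an affine Leibniz bracket its Leibnizian linearizes to the base point at every point of $\mathfrak{a}(\mathfrak{l};\lambda,\mu,s)$, in particular at $a\in\mathfrak{b}$, and all arguments occurring in \eqref{cond_gen} remain inside the affine subspace $\mathfrak{b}$, so the linearization of $\{-,-\}|_{\mathfrak{b}}$ at $a$ vanishes. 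Second, to identify the fibre datum I would use the translation $\varphi\colon\mathfrak{b}\to\mathfrak{h}$, $x\mapsto x-a$, an affine isomorphism carrying the chosen base point $a$ to $0$.

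The very same expansion then shows that $\varphi(\{x,y\})=\{\varphi(x),\varphi(y)\}'$, where $\{-,-\}'$ is the bracket $[h_1,h_2]+(\lambda+\operatorname{ad}_a)(h_1)+(\mu+[a,-])(h_2)+\big([a,a]-a+(\lambda+\mu)(a)+s\big)$ on $\mathfrak{h}$. By the notation established in Theorem~\ref{mainth_phi_gen} this is precisely the affine Leibniz bracket of $\mathfrak{a}(\mathfrak{h};\lambda+\operatorname{ad}_a,\mu+[a,-],[a,a]-a+(\lambda+\mu)(a)+s)$, so $\varphi$ is the desired isomorphism of Leibniz affgebras (and, incidentally, transporting the Leibniz affgebra structure of $\mathfrak{b}$ across $\varphi$ confirms that the stated parameters satisfy conditions \eqref{gen.Leibnizian} on $\mathfrak{h}$). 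I do not expect any genuine obstacle here beyond recognizing that the coefficients appearing in the first expansion are exactly the three parameters of the sub-affgebra.
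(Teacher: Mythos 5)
Your proposal is correct and follows essentially the same route as the paper's proof: the same identification of affine subspaces as $a+\mathfrak{h}$, the same four-block expansion of $\{a+h_1,a+h_2\}$, and the same specializations ($h_1=h_2=0$ giving \eqref{cond1_subaff_Leib}, then $h_2=0$ and $h_1=0$ giving \eqref{cond2_subaff_Leib} and \eqref{cond3_subaff_Leib}, with closure of $[-,-]$ on $\mathfrak{h}$ as the residual condition). The only difference is in the final assertion, where the paper simply invokes Corollary~\ref{iso_charac} with $\psi=\mathrm{id}$ restricted to $\mathfrak{h}$ and $q=a$, while you verify the translation $x\mapsto x-a$ directly and also justify explicitly that $\mathfrak{b}$ is a Leibniz affgebra via restriction of the Leibnizian --- a point the paper leaves implicit.
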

\begin{proof}
Any affine subspace of $\mathfrak{a}(\mathfrak{l};\lambda,\mu,s)$ is of the form $a+\mathfrak{h}$ with $\mathfrak{h}$ a vector subspace of $\mathfrak{l}$ and $a\in \mathfrak{a}(\mathfrak{l};\lambda,\mu,s)$. Then, set $\mathfrak{b}=a+\mathfrak{h}$.

(i) $\implies$ (ii). If $\mathfrak{b}$ is a Leibniz subaffgebra of $\mathfrak{a}(\mathfrak{l};\lambda,\mu,s)$, then $\{\mathfrak{b},\mathfrak{b}\}\subseteq \mathfrak{b}$. Hence, for all $a+u,a+v\in\mathfrak{b}$, 
\begin{align*}
\{a+u,a+v\}=[a,a]+[a,v]+[u,a]+[u,v]+\lambda(a+u)+\mu(a+v)+s\in\mathfrak{b}.
\end{align*}
From the above equation, the following consequences are obtained:
\begin{itemize}
\item By setting $u=0=v$, $[a,a]+(\lambda+\mu)(a)+s\in a+\mathfrak{h}$ which implies \eqref{cond1_subaff_Leib}.
\item By setting $v=0$, $[a,a]+[u,a]+\mu(a)+\lambda(a+u)+s\in a+\mathfrak{h}$. Then, by applying \eqref{cond1_subaff_Leib}, \eqref{cond2_subaff_Leib} is obtained.
\item By setting $u=0$, $[a,a]+[a,v]+\lambda(a)+\mu(a+v)+s\in a+\mathfrak{h}$. And then, again applying \eqref{cond1_subaff_Leib}, \eqref{cond3_subaff_Leib} is obtained.
\item By \eqref{cond_subaff} it automatically follows that $\mathfrak{h}$ is a Leibniz subalgebra of $\mathfrak{l}$.
\end{itemize}

(ii) $\implies$ (i). Assume that $\mathfrak{b}=a+\mathfrak{h}$ with $\mathfrak{h}$ a Leibniz subalgebra of $\mathfrak{l}$ such that the conditions \eqref{cond_subaff} hold. Then:
\begin{align*}
&\{a+u,a+v\}=\underbrace{[a,a]+(\lambda+\mu)(a)+s}_{\in\mathfrak{b}\,\textnormal{(by \eqref{cond1_subaff_Leib})}}+\underbrace{[a,v]+\mu(v)}_{\in\mathfrak{h}\,\textnormal{(by \eqref{cond3_subaff_Leib})}}+\underbrace{[u,a]+\lambda(u)}_{\in\mathfrak{h}\,\textnormal{(by \eqref{cond2_subaff_Leib})}}+\underbrace{[u,v],}_{\in\mathfrak{h}\,\textnormal{(by $[\mathfrak{h},\mathfrak{h}]\subseteq\mathfrak{h}$)}}
\end{align*}
so that $\{a+u,a+v\}\in\mathfrak{b}$.

The final assertion follows by restricting the identity automorphism of $\mathfrak{l}$ to $\mathfrak{h}$ and using Corollary~\ref{iso_charac} with $q=a$.
\end{proof}

The problem of affinization of Leibniz algebras diverges into two  directions. The first direction is to find and classify all affine Leibniz brackets on a given affine space. The initial step is easy, as Theorem~\ref{fibre_gen} and Theorem~\ref{mainth_phi_gen} show that a datum consisting of a Leibniz bracket on any of the fibres together with an element and a pair of linear endomorphisms of this fibre gives rise to an affine Leibniz bracket. The next step is to use the classification of Leibniz affgebras up to isomorphism and use Corollary~\ref{iso_charac} to resolve the relations between different linear data.
The second direction involves attempting to classify Leibniz affgebras with Leibnizian of a prescribed type.   By Theorem~\ref{mainth_phi_gen}, the form of a chosen Leibnizian restricts the range of Leibniz algebra structures on the fibres of Leibniz affgebras with this Leibnizian. In the next section we look at a few possible choices, each one coming with their own motivation, and discuss their advantages and shortcomings.

Note that Theorem~\ref{thm.hom} and Corollary~\ref{iso_charac}, which facilitate the classification of Leibniz affgebras of a given type, depend implicitly on the type of Leibniz affgebra as the choice of the Leibnizian affects the conditions \eqref{gen.Leibnizian} that $\lambda$, $\mu$ and $s$ have to satisfy.

\section{Types of Leibniz affgebras}\label{sec.types}
\subsection{Derivative and vector-type Leibniz affgebras}\label{sec.inhom}
In the linear setup a derivation of a function at a given point can be understood as a vector measure of the speed of change of the function at this point. In purely affine or vector space independent setting this should be replaced by a translation from a value of the function at a given point by the vector of the change of value. The resulting Leibniz rule for an affine derivation $D$ of an affgebra $\mathfrak{a}$ with the multiplication $*$ comes out as
\begin{equation}\label{aff.Leibniz}
    D(a*b) = \langle D(a)*b,a*b,a*D(b)\rangle, 
\end{equation}
for all $a,b\in \mathfrak{a}$ (see also \cite[Definition 3.10]{BRZ2}). To translate the derivation property of a Leibniz bracket note first that \eqref{Leibnizian} can be equivalently stated as
\begin{equation}\label{Leibnizian.der}
   \{\{a,b\},c\} = \langle \{\{a,c\},b\}, \Lambda(a,b,c),\{a,\{b,c\}\}\rangle.
\end{equation}
Thus the maps $\{-,c\}:\mathfrak{a} \to \mathfrak{a}$ satisfy the affine derivation property \eqref{aff.Leibniz} provided $\Lambda(a,b,c)= \{a,b\}$, for all $a,b,c,\in \mathfrak{a}$. The following lemma is proven by a simple verification.
\begin{lemma}\label{lem.Leib.der}
 Let $\{-,-\}$ be a bi-affine operation of $\mathfrak{a}$. Then, for all $o\in \mathfrak{a}$  the tri-affine map $\Lambda: \mathfrak{a}^3\to \mathfrak{a}$, $(a,b,c)\mapsto \{a,b\}$ satisfies \eqref{cond_gen}.  
\end{lemma}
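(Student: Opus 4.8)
The plan is to substitute the prescribed Leibnizian $\Lambda(a,b,c)=\{a,b\}$ directly into the linearization $\widetilde{\Lambda}$ appearing in \eqref{cond_gen} and to carry out the computation in the tangent space $T_o\mathfrak{a}$, where $o$ is the zero vector and $+,-$ denote the induced vector-space operations. The whole argument rests on a single structural feature of this particular $\Lambda$: it does not depend on its third argument, so replacing $c$ by $o$ in the last slot leaves every value unchanged.

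Concretely, I would first record the eight evaluations entering $\widetilde{\Lambda}$, namely $\Lambda(a,b,c)=\Lambda(a,b,o)=\{a,b\}$, $\Lambda(a,o,c)=\Lambda(a,o,o)=\{a,o\}$, $\Lambda(o,b,c)=\Lambda(o,b,o)=\{o,b\}$ and $\Lambda(o,o,c)=\Lambda(o,o,o)=\{o,o\}$. Then I would regroup the eight terms of \eqref{cond_gen} into the four differences that share their first two arguments:
\begin{align*}
\widetilde{\Lambda}(a,b,c)=\,&\bigl(\Lambda(a,b,c)-\Lambda(a,b,o)\bigr)-\bigl(\Lambda(a,o,c)-\Lambda(a,o,o)\bigr)\\
&-\bigl(\Lambda(o,b,c)-\Lambda(o,b,o)\bigr)+\bigl(\Lambda(o,o,c)-\Lambda(o,o,o)\bigr).
\end{align*}
Since each bracket has the form $\Lambda(x,y,c)-\Lambda(x,y,o)=\{x,y\}-\{x,y\}=o$, every summand vanishes and hence $\widetilde{\Lambda}(a,b,c)=o$, which is exactly the condition \eqref{cond_gen}.

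The computation is purely formal: it uses neither the bi-affinity of $\{-,-\}$ nor the specific shape of the bracket, only that $\Lambda$ factors through the projection $(a,b,c)\mapsto(a,b)$, and it is manifestly uniform in the base point $o$. Conceptually, $\widetilde{\Lambda}$ is nothing but the iterated mixed finite difference of $\Lambda$ at $o$ in the three variables, and such an operator annihilates any map that is constant in one of its variables; the difference taken in the $c$-slot already produces zero. Accordingly there is no genuine obstacle here: the only thing to be careful about is matching the four plus/minus signs of \eqref{cond_gen} with the correct four pairs, which the grouping above makes transparent.
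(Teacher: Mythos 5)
Your proof is correct and is exactly the ``simple verification'' the paper alludes to (the paper gives no written proof beyond that phrase): since $\Lambda(a,b,c)=\{a,b\}$ is constant in $c$, the eight terms of \eqref{cond_gen} cancel in pairs in $T_o\mathfrak{a}$, and your regrouping into four differences over the $c$-slot matches the signs of \eqref{cond_gen} correctly. Your added remark that neither bi-affinity nor the form of the bracket is used, only constancy in one variable, is a valid and slightly sharper observation than the paper records.
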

%
%
\begin{definition}\label{inhomLeibnizAffgebra}
A Leibniz affgebra $(\mathfrak{a},\{-,-\})$ with  Leibnizian $\Lambda(a,b,c) = \{a,b\}$ is called a \textbf{derivative Leibniz affgebra}.
\end{definition}

\begin{example}\label{ex.L.deriv}
All types of Leibniz affgebras in Example~\ref{ex.trivial} are derivative Leibniz affgebras.

Let $(\mathfrak{a},\mu_{\mathfrak{a}})$ be an associative $\mathbb{F}$-affgebra and let $D\colon \mathfrak{a}\rightarrow \mathfrak{a}$ be an affine map such that
\begin{equation}\label{condD}
D(D(a)b)=D(a)D(b)=D(aD(b))
\end{equation}
for all $a,b\in\mathfrak{a}$. One can check by direct though quite tedious computation that $\mathfrak{a}$ is a derivative Leibniz affgebra with the affine Leibniz bracket given by
$\{a,b\}\coloneqq\langle aD(b), D(b)a,a \rangle$. Note that any idempotent $D$ that is either an affgebra morphism or an affine derivation of $\mathfrak{a}$ satisfies \eqref{condD}.
\end{example}

Construction of derivative Leibniz affgebras with a prescribed Leibniz algebra fibre $(\mathfrak{l},[-,-])$ via the formula \eqref{th_affLeib_bracket} boils down to solving the following equations that result from equations \eqref{gen.Leibnizian} in Theorem~\ref{mainth_phi_gen}:
\begin{subequations}\label{gen.Leibnizian.der}
    \begin{align}
    \label{gen12.der}
\mu(s) = 0,&  \qquad [a,s]=0, \qquad 
(\mu\lambda-\lambda\mu)(b)+[s,b] =0,\\
    \label{gen34.der}
\mu^{2}(c)-\mu(c)+\lambda\mu(c)-[s,c]=0,& \qquad
[a,b]=[\lambda(a),b]+[a,\lambda(b)]-\lambda([a,b]),\\
    \label{gen56}
\lambda([a,c])+[a,\mu(c)]-[\lambda(a),c] =0,&\qquad
  \mu([b,c])+[\mu(c),b]-[\mu(b),c] =0,
\end{align}
\end{subequations}
for all $a,b,c\in\mathfrak{l}$. These are restrictive, but we will not comment on them here, except for pointing out that they include equations describing {\em homogeneous} Leibniz affgebras that are to be introduced in the forthcoming Section~\ref{sec.hom}. Instead we describe the connection with Leibniz affgebras that can be defined as a noncommutative version of Lie affgebras as understood by Grabowska, Grabowski and Urba\'{n}ski in \cite{GGU}, i.e.\  affine spaces over a fixed vector space with a vector valued Lie bracket. As explained in \cite[Proposition 3.5]{BRZ2} these correspond to Lie affgebras (in the sense of Definition~\ref{def.Lie}) with idempotent affine Lie brackets.

\begin{definition}
Let $\mathfrak{a}$ be an affine space over a vector space $\overrightarrow{\mathfrak{a}}$. A \textbf{vector-valued Leibniz bracket} on $(\mathfrak{a},\overrightarrow{\mathfrak{a}})$ is a bi-affine map 
\[[-,-]_{v}\colon \mathfrak{a}\times \mathfrak{a}\rightarrow\overrightarrow{\mathfrak{a}}\]
such that $[-,-]_{v}$ satisfies the Leibniz rule on $\overrightarrow{\mathfrak{a}}$, i.e., for all $a,b,c\in \mathfrak{a}$,
\begin{equation}\label{vector-valued-LeibRule}
    [[a,b]_{v},\overrightarrow{c}]_{v}=[[a,c]_{v},\overrightarrow{b}]_{v}+[\overrightarrow{a},[b,c]_{v}]_{v}
\end{equation}
where $[\overrightarrow{a},-]_{v},[-,\overrightarrow{b}]_{v}: \overrightarrow{\mathfrak{a}}\to \overrightarrow{\mathfrak{a}}$ are the linear parts of the affine maps $[a,-]_{v},[-,b]_v:\mathfrak{a}\to \mathfrak{a}$, respectively, for all $a,b\in\mathfrak{a}$.
\end{definition}

\begin{remark}
Note that by setting $c=b$ in \eqref{vector-valued-LeibRule} we obtain
\begin{equation}\label{quasi-nilpotent}
    [\overrightarrow{a},[b,b]_{v}]_{v} =0,
\end{equation}
for all $a,b\in \mathfrak{a}$, which has the same flavour as \eqref{condLeibR} in Remark~\ref{rem.condLeibR}.
\end{remark}
\begin{proposition}
    Let $\mathfrak{a}$ be an affine space. There exists a bijective correspondence between vector-valued Leibniz brackets on $(\mathfrak{a},T_{o}\mathfrak{a})$ and derivative affine Leibniz brackets on $\mathfrak{a}$ (following Definition \ref{inhomLeibnizAffgebra}) for all $o\in \mathfrak{a}$.
\end{proposition}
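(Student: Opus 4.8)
The plan is to construct the bijection explicitly in both directions and then verify that the Leibniz conditions match up on each side. Given a derivative affine Leibniz bracket $\{-,-\}$ on $\mathfrak{a}$ and a fixed point $o\in\mathfrak{a}$, I would use the identification $\overrightarrow{\mathfrak{a}}=T_{o}\mathfrak{a}$ together with the translation $a+v=\langle a,o,v\rangle$ from Remark~\ref{morph_aff_lin} to define a candidate vector-valued bracket by
\[
[a,b]_{v}\coloneqq\overrightarrow{\{a,b\}\,o}=\langle o,o,\{a,b\}\rangle\in T_{o}\mathfrak{a},
\]
that is, the vector in $T_{o}\mathfrak{a}$ pointing from the base point to $\{a,b\}$. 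First I would check that $[-,-]_{v}$ is bi-affine into $T_{o}\mathfrak{a}$; this is immediate since $\{-,-\}$ is bi-affine and the translation map is an affine isomorphism. The core of the argument is to show that the derivative condition $\Lambda(a,b,c)=\{a,b\}$ is equivalent to the vector-valued Leibniz rule \eqref{vector-valued-LeibRule}.

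The key computation is to re-express \eqref{vector-valued-LeibRule} in terms of $\{-,-\}$. I would rewrite the displayed Leibnizian identity \eqref{Leibnizian.der},
\[
\{\{a,b\},c\}=\langle\{\{a,c\},b\},\Lambda(a,b,c),\{a,\{b,c\}\}\rangle,
\]
and substitute $\Lambda(a,b,c)=\{a,b\}$. Translating this heap identity into the fibre $T_{o}\mathfrak{a}$ via $v\mapsto\langle o,o,v\rangle$, the outer brackets $\{-,c\}$ become, up to their linear parts, exactly the maps $[-,\overrightarrow{c}]_{v}$ appearing in \eqref{vector-valued-LeibRule}. The point is that in the linear part the affine translations cancel: since $[-,-]_{v}$ records only the vector pointing to $\{a,b\}$, the constant and one-sided affine corrections drop out, and the derivative condition collapses precisely to the three-term Leibniz identity on $\overrightarrow{\mathfrak{a}}$. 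This matches the observation already made in the text that $\{-,c\}$ satisfies the affine derivation property \eqref{aff.Leibniz} exactly when $\Lambda(a,b,c)=\{a,b\}$.

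For the converse direction, given a vector-valued Leibniz bracket $[-,-]_{v}$ on $(\mathfrak{a},T_{o}\mathfrak{a})$, I would set
\[
\{a,b\}\coloneqq o+[a,b]_{v}=\langle[a,b]_{v},o,o\rangle\in\mathfrak{a},
\]
which is bi-affine, and verify that its Leibnizian equals $\{a,b\}$, hence that it is an affine Leibniz bracket of derivative type by Definition~\ref{inhomLeibnizAffgebra}. I would then check that the two assignments are mutually inverse: composing them returns the original bracket because translating to $T_{o}\mathfrak{a}$ and translating back are inverse affine isomorphisms. The main obstacle I anticipate is bookkeeping the affine-versus-linear distinction carefully, namely confirming that the linear parts $[\overrightarrow{a},-]_{v}$ and $[-,\overrightarrow{b}]_{v}$ used in \eqref{vector-valued-LeibRule} genuinely coincide with the linear parts of $\{a,-\}$ and $\{-,b\}$ under the identification; once this identification is pinned down, the equivalence of the two Leibniz rules is a direct translation of \eqref{Leibnizian.der}. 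Finally, I would remark that the construction is independent of the choice of $o$ up to the canonical isomorphisms of fibres, so the correspondence holds for all $o\in\mathfrak{a}$ as stated.
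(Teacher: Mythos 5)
Your overall plan (explicit maps in both directions, then matching the derivative condition against \eqref{vector-valued-LeibRule}) is the same skeleton as the paper's proof, but the map you construct is based at the wrong point, and this is not a bookkeeping issue you can fix later --- it is where the proof fails. You set $[a,b]_{v}\coloneqq\langle o,o,\{a,b\}\rangle$, the vector from the base point $o$ to $\{a,b\}$; the correct assignment (the one the paper uses) is $[a,b]_{v}\coloneqq\langle\{a,b\},a,o\rangle=\{a,b\}-a$, the vector from the \emph{first argument} $a$ to $\{a,b\}$. Work in $T_{o}\mathfrak{a}$ (so $o=0$). With your definition the linear parts are $[x,\overrightarrow{c}\,]_{v}=\{x,c\}-\{o,c\}$ and $[\overrightarrow{a},x]_{v}=\{a,x\}-\{a,o\}$, so \eqref{vector-valued-LeibRule} reads
\begin{equation*}
\{\{a,b\},c\}-\{o,c\}=\{\{a,c\},b\}-\{o,b\}+\{a,\{b,c\}\}-\{a,o\},
\end{equation*}
whereas the derivative condition $\Lambda(a,b,c)=\{a,b\}$, written in $T_{o}\mathfrak{a}$ via \eqref{Leibnizian.der}, is
\begin{equation*}
\{\{a,b\},c\}=\{\{a,c\},b\}-\{a,b\}+\{a,\{b,c\}\}.
\end{equation*}
Comparing the two, your correspondence preserves the relevant structure only when $\{a,b\}+\{o,c\}=\{a,o\}+\{o,b\}$ for all $a,b,c$, i.e.\ only when both the bilinear part of $\{-,-\}$ and the linear part $\mu=\{o,-\}-\{o,o\}$ vanish. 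So the ``constant and one-sided affine corrections drop out'' step, which you left as a verbal claim, is exactly the step that is false for your map.

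A concrete counterexample: on any non-abelian Leibniz algebra $(\mathfrak{l},[-,-])$ take $\{a,b\}\coloneqq[a,b]+a$. Its Leibnizian is $[[a,c],b]-[[a,b],c]+[a,[b,c]]+[a,b]+a=[a,b]+a=\{a,b\}$ by \eqref{LeibRule}, so this is a derivative affine Leibniz bracket in the sense of Definition~\ref{inhomLeibnizAffgebra}. Your bracket $[a,b]_{v}=[a,b]+a$ then gives
\begin{equation*}
[[a,b]_{v},\overrightarrow{c}\,]_{v}-[[a,c]_{v},\overrightarrow{b}\,]_{v}-[\overrightarrow{a},[b,c]_{v}]_{v}=-[a,b]\neq 0,
\end{equation*}
so your forward map does not even land in the set of vector-valued Leibniz brackets, and hence cannot be one half of a bijection between the two sets. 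With the shifted definition $[a,b]_{v}=\{a,b\}-a$ (and, in the converse direction, $\{a,b\}\coloneqq[a,b]_{v}+a$ rather than $[a,b]_{v}+o$) the same computation closes with no leftover term. The conceptual point you missed is spelled out at the start of Section~\ref{sec.inhom}: the affine analogue of a derivative records the change of value \emph{from the point being acted on}, so the vector-valued bracket must be based at the first argument, not at an arbitrarily chosen origin.
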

\begin{proof}
First observe that in terms of the binary operations in $T_{o}\mathfrak{a}$, the derivative Leibniz bracket condition reads as
\begin{equation}\label{deriv.add}
    \{\{a,b\},c\} =  \{\{a,c\},b\} - \{a,b\}+\{a,\{b,c\}\}.
\end{equation}

    Let us assume that $\{-,-\}$ is a derivative affine Leibniz bracket on $\mathfrak{a}$ and, for $o \in \mathfrak{a}$, define the bi-affine map
    \begin{align*}
        [-,-]_{v}\colon \mathfrak{a}\times \mathfrak{a} \rightarrow T_{o}\mathfrak{a}, \qquad(a,b)\mapsto [a,b]_{v}\coloneqq \{a,b\}-a=\langle\{a,b\},a,o\rangle. 
    \end{align*}
    Note that 
    \begin{align*}
        [x,\overrightarrow{a}]_{v}= \{x,a\}-x-\{o,a\} \quad \textnormal{and} \quad 
        [\overrightarrow{a},x]_{v}=\{a,x\} -\{a,o\}.
    \end{align*}
    As a consequence,
    \begin{align*}
        [[a,b]_{v},\overrightarrow{c}]_{v}&=\{\{a,b\},c\}-\{a,c\}+a-\{a,b\} \quad \textnormal{and} \quad [\overrightarrow{a},[b,c]_{v}]_{v}=\{a,\{b,c\}\} -\{a,b\}.
    \end{align*}
Putting all this together and using \eqref{deriv.add} one easily verifies \eqref{vector-valued-LeibRule}.

In the other direction, for $o\in \mathfrak{a}$, let us assume that $[-,-]_{v}$ is a vector-valued Leibniz bracket on $(\mathfrak{a},\overrightarrow{\mathfrak{a}} = T_o\mathfrak{a})$ and define the bi-affine map
    \begin{align*}
        \{-,-\}\colon \mathfrak{a}\times \mathfrak{a}\rightarrow \mathfrak{a},\qquad (a,b) \mapsto\{a,b\}\coloneqq[a,b]_{v}+a=\langle[a,b]_{v},o,a\rangle.
    \end{align*}
    Noting that
    \begin{align*}
        \{\{a,b\},c\}=[[a,b]_{v},\overrightarrow{c}]_{v}+[a,b]_{v}+[a,c]_{v}+a \quad \textnormal{and} \quad  \{a,\{b,c\}\}=[\overrightarrow{a},[b,c]_{v}]_{v}+[a,b]_{v}+a,
    \end{align*}
    we obtain \eqref{deriv.add}, hence $\{-,-\}$ is a derivative affine Leibniz bracket on $\mathfrak{a}$. 
\end{proof}

\subsection{Homogeneous Leibniz affgebras}\label{sec.hom}
The form of \eqref{Leibnizian} immediately implies that a Leibnizian $\Lambda$ on $\mathfrak{a}$ necessarily satisfies the condition $\Lambda(a,b,b) = \{a,\{b,b\}\}$ for all $a,b\in \mathfrak{a}$. Taking heed of this one can consider the following definition. 

\begin{definition}\label{homLeibnizAffgebra}
Let $\mathfrak{a}$ be an affine space. An affine Leibniz bracket $\{-,-\}$ on $\mathfrak{a}$  with Leibnizian $\Lambda(a,b,c) = \{a,\{b,b\}\}$, for all $a,b,c\in \mathfrak{a}$, is said to be \textbf{homogeneous}.  A \textbf{homogeneous Leibniz affgebra} is an affine space $\mathfrak{a}$ together with a homogeneous affine Leibniz bracket.

\end{definition}
\begin{remark}
Assume that $(\mathfrak{a},\{-,-\})$ is a derivative Leibniz affgebra and $o\in \mathfrak{a}$. Then, by \eqref{deriv.add}, in $T_{o}\mathfrak{a}$ we have that:
\begin{align*}
\{\{a,b\},b\}=\{a,\{b,b\}\}-\{a,b\}+\{\{a,b\},b\}\iff\{a,b\}=\{a,\{b,b\}\}
\end{align*}
for all $a,b\in\mathfrak{a}$. As a consequence, any derivative Leibniz affgebra is also homogeneous. 
\end{remark}
\begin{example}\label{ex.L.hom}
Every Leibniz algebra $(\mathfrak{l},[-,-])$ can be seen as a homogeneous Leibniz affgebra with the affine Leibniz bracket defined as follows:
\[\{a,b\}\coloneqq [a,b]+a+s\]
for all $a,b\in\mathfrak{l}$ and $s\in \mathcal{LZ}(\mathfrak{l})$.
Indeed, 
\begin{align*}
\Lambda(a,b,c) =&-[[a,b],c]+[a,[b,c]]+[[a,c],b] +[a,b]+ [s,b-c] +[a,s]+a +s \\=&[a,b]+[a,s]+a +s\;\footnotesize\textnormal{(by \eqref{LeibRule} and $s\in\mathcal{LZ}(\mathfrak{l})$)}\\=&\{a,\{b,b\}\}\;\footnotesize\textnormal{(by \eqref{condLeibR})}.
\end{align*}
Note that these homogeneous Leibniz affgebras are not derivative.
\end{example}

Leibniz affgebras in Example~\ref{ex.L.hom} are of course a special case of general construction of Leibniz affgebras with a prescribed Leibniz fibre described in Theorem~\ref{mainth_phi_gen}.

\begin{proposition}
    \label{imp_Leib}
Let $(\mathfrak{l},[-,-])$ be a Leibniz algebra. Then  $\mathfrak{l}$ is a homogeneous Leibniz affgebra as in Theorem~\ref{mainth_phi_gen} if and only if 
\begin{subequations}\label{Leib_cond_th}
\begin{align}\label{Leib_cond_th1}
[\lambda(a),b]-[a,\mu(b)]=\lambda([a,b]),\\
\label{Leib_cond_th2}
[\mu(a),b]-[\mu(b),a]=\mu([a,b]),\\
\label{Leib_cond_th3}
[s,a]+\mu(a)=\mu^{2}(a)+\lambda\mu(a)
\end{align}
\end{subequations}
hold. 
All homogeneous Leibniz affgebras with fibres isomorphic to $(\mathfrak{l},[-,-])$ arise in this way.
%
\end{proposition}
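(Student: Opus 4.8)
The plan is to reduce the whole statement to a single identity in the zero-fibre and then unpack it by specialization. By Theorem~\ref{mainth_phi_gen}---equivalently, because the linearization of the bracket \eqref{th_affLeib_bracket} at $0$ is exactly $[-,-]$, so that Theorem~\ref{fibre_gen} applies---the bi-affine bracket $\{a,b\}=[a,b]+\lambda(a)+\mu(b)+s$ is an affine Leibniz bracket for \emph{any} $\lambda,\mu,s$. Consequently, by Definition~\ref{homLeibnizAffgebra}, $\mathfrak{a}(\mathfrak{l};\lambda,\mu,s)$ is homogeneous if and only if its Leibnizian equals $\{a,\{b,b\}\}$, i.e.\ the tri-affine map $E(a,b,c)\coloneqq\Lambda(a,b,c)-\{a,\{b,b\}\}$ vanishes identically on $\mathfrak{l}$. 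The proof then consists in computing $E$ and showing that $E\equiv 0$ is equivalent to \eqref{Leib_cond_th}.

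First I would expand $\Lambda$ from its fibre form \eqref{Leibnizian_linear} by substituting \eqref{th_affLeib_bracket} into each nested bracket. Two families of terms vanish at once: the cubic terms $[[a,c],b]-[[a,b],c]+[a,[b,c]]$ cancel by the Leibniz rule \eqref{LeibRule}, and the $\lambda^{2}(a)$ and $\lambda(s)$ contributions cancel in pairs. Expanding $\{a,\{b,b\}\}$ the same way and discarding $[a,[b,b]]=0$ via \eqref{condLeibR}, a routine if lengthy comparison yields
\begin{align*}
E(a,b,c)={}&\bigl([\lambda(a),b]-[a,\mu(b)]-\lambda([a,b])\bigr)-\bigl([\lambda(a),c]-[a,\mu(c)]-\lambda([a,c])\bigr)\\
&+\bigl([s,b]+\mu(b)-\mu^{2}(b)-\lambda\mu(b)\bigr)-\bigl([s,c]+\mu(c)-\mu^{2}(c)-\lambda\mu(c)\bigr)\\
&-\bigl([\mu(b),c]-[\mu(c),b]-\mu([b,c])\bigr)-\mu([b,b]).
\end{align*}
Writing $P_{1}(x,y),P_{2}(x,y),P_{3}(x)$ for the left-minus-right defects of \eqref{Leib_cond_th1}, \eqref{Leib_cond_th2}, \eqref{Leib_cond_th3}, this reads $E(a,b,c)=P_{1}(a,b)-P_{1}(a,c)+P_{3}(b)-P_{3}(c)-P_{2}(b,c)-\mu([b,b])$.

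The only awkward summand is the purely quadratic $\mu([b,b])$, which fits no single defect $P_i$; the observation that unlocks both implications is that it is precisely the diagonal value $\mu([b,b])=-P_{2}(b,b)$ (set $x=y=b$ in $P_2$, in the spirit of \eqref{condLeibR}). For the forward direction this is immediate: if \eqref{Leib_cond_th} hold then all $P_i$ vanish, so $\mu([b,b])=-P_2(b,b)=0$ and hence $E\equiv 0$. For the converse I would extract the three conditions by specialization: $E(0,0,c)=-P_3(c)$ forces \eqref{Leib_cond_th3}; then $E(0,b,0)=-\mu([b,b])$ forces $\mu([b,b])=0$, whereupon $E(0,b,c)=-P_2(b,c)$ gives \eqref{Leib_cond_th2}; and finally $E(a,b,0)=P_1(a,b)$ gives \eqref{Leib_cond_th1}. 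Each step uses only that $P_1(\cdot,0)$, $P_2(\cdot,0)$, $P_2(0,\cdot)$ and $P_3(0)$ vanish trivially because $[-,0]=[0,-]=0$ and the maps are linear; in particular no hypothesis on $\operatorname{char}\mathbb{F}$ is needed, since the quadratic term is resolved through the diagonal of $P_2$ rather than by polarization.

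The closing assertion---that every homogeneous Leibniz affgebra with fibre isomorphic to $\mathfrak{l}$ arises in this form---is the converse half of Theorem~\ref{mainth_phi_gen}: an arbitrary Leibniz affgebra $\mathfrak{a}$ equals $\mathfrak{a}(T_o\mathfrak{a};\lambda,\mu,s)$ at any $o\in\mathfrak{a}$, transporting the data along a Leibniz-algebra isomorphism $T_o\mathfrak{a}\xrightarrow{\ \sim\ }\mathfrak{l}$ rewrites it in the stated shape, and homogeneity then forces \eqref{Leib_cond_th} by the equivalence just established. I expect the bulk of the work to be the bookkeeping in the expansion of $E$; the one genuinely structural point is the diagonal identity $\mu([b,b])=-P_{2}(b,b)$, which is what reconciles the quadratic term with the bilinear condition \eqref{Leib_cond_th2}.
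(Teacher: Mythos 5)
Your proof is correct and takes essentially the same route as the paper: the components you extract from the defect $E$ by specializing arguments to $0$ are precisely the paper's conditions \eqref{gen.Leibnizian} with the prescribed Leibnizian $\{a,\{b,b\}\}$ substituted on their left-hand sides, and your diagonal identity $\mu([b,b])=-P_{2}(b,b)$ is exactly the paper's remark that \eqref{gen2} yields $\mu([a,a])=0$, which is already contained in \eqref{Leib_cond_th2} at $a=b$. The only difference is presentational: the paper reads the constraints off the ready-made component identities of Theorem~\ref{mainth_phi_gen}, whereas you re-expand the Leibnizian from scratch and organize the comparison as a single defect map.
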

\begin{proof}
First note that if the affine Leibniz bracket is given by \eqref{th_affLeib_bracket}, then
$$
\{a,\{b,b\}\} = [a,\lambda(b) +\mu(b) + s] + \lambda(a) + \mu([b,b]) + \mu^2(b) +\mu\lambda(b) +\mu(s) +s,
$$
where we used \eqref{condLeibR}. Conditions \eqref{gen0} and \eqref{gen1} do not generate any constraints on $\lambda,\mu$ and $s$. Since the homogeneous Leibnizian does not depend on the third argument, the left hand sides of equations \eqref{gen3}, \eqref{gen5} and \eqref{gen6} vanish and these equations lead to all three \eqref{Leib_cond_th}. In view of \eqref{Leib_cond_th3}, \eqref{gen2}  yields $\mu([a,a]) =0$ which is already contained in \eqref{Leib_cond_th2} by setting $a=b$. Finally, \eqref{gen4} coincides with \eqref{Leib_cond_th1}. The assertions then follow by Theorem~\ref{mainth_phi_gen}.
\end{proof}

The main advantage of homogeneous Leibniz affgebras is a relative simplicity of conditions \eqref{Leib_cond_th}. This on one hand allows one to construct wide classes of examples, while on the other also permits effective classification of such affgebras. We illustrate this in detail in what follows.

\begin{example}[1-dimensional homogeneous Leibniz affgebras]\label{example.1dim}
Let $(\mathfrak{l},[-,-])$ be a 1-dimensional Leibniz algebra with basis $\{e\}.$ By \eqref{LeibRule}, $\mathfrak{l}$ is automatically abelian, i.e., $[\mathfrak{l},\mathfrak{l}]=0$. Consider $\lambda,$ $\mu$ linear endomorphisms of $\mathfrak{l}$ given by $\lambda(e)=\lambda e$, $\mu(e)=\mu e$ with $\lambda,\mu\in\mathbb{F}$. Due to $\mathfrak{l}$ being abelian, conditions \eqref{Leib_cond_th1} and \eqref{Leib_cond_th2} do not induce any constraints on $\lambda$, $\mu$. However, \eqref{Leib_cond_th3} holds if and only if $\mu=0$ or $\mu=1-\lambda$. As a consequence, there are two possible families of Leibniz affgebras fibred by $\mathfrak{l}$: $\mathfrak{a}(\mathfrak{l};\lambda,0,s)$ and $\mathfrak{a}(\mathfrak{l};\lambda,\mathrm{id}-\lambda,s)$ with no restrictions on $s$. Let $\psi(e)=\psi e$, with $\psi\in\mathbb{F}\setminus\{0\}$, be an automorphism of $\mathfrak{l}$ and $q=q e\in\mathfrak{l}$ with $q\in\mathbb{F}$. By applying Corollary \ref{iso_charac} with $s=se$, $s\in\mathbb{F}$, it is obtained that 
\begin{gather}\label{s'_1dim}
    s'=\psi(s-q(\lambda+\mu-1))e,
\end{gather}
whereas $\lambda'=\lambda$ and $\mu'=\mu$. 

On the one hand, in case $\mu=0$, $s'=\psi(s-q(\lambda-1))e$ by \eqref{s'_1dim}. If $\lambda\neq 1$, then $s'=0$ by choosing  $q=\frac{s}{\lambda-1}$. However, if $\lambda=1$, then $s'=\psi s e$ which can be either $s'=0$ when $s=0$, or $s'=e$ by setting $\psi=s^{-1}$ when $s\neq 0$. On the other hand, when $\mu=1-\lambda$, $s'=\psi s e$ by \eqref{s'_1dim}. Then we conclude that one-dimensional homogeneous Leibniz affgebras fall into three classes:
$$
\mathfrak{a}(\mathfrak{l};\lambda,0,0), \qquad \mathfrak{a}(\mathfrak{l};\lambda,\mathrm{id}-\lambda,0) , \qquad \mathfrak{a}(\mathfrak{l};\lambda,\mathrm{id}-\lambda,e),
$$
where $e$ is a fixed element of a basis for $\mathfrak{l}$.

\end{example}

\begin{example}[Homogeneous Leibniz affgebras with abelian fibres] \label{example_abelian_fibre}
Let $\mathfrak{l}$ be the abelian Leibniz algebra, i.e., $[\mathfrak{l},\mathfrak{l}]=0$. Then \eqref{Leib_cond_th1} and \eqref{Leib_cond_th2} always hold, while \eqref{Leib_cond_th3} is equivalent to the functorial identity
$$
(\mu +\lambda - \mathrm{id})\mu =0,
$$
which can be solved for $\lambda$ as
$$
\lambda (a) = \begin{cases}
    \nu(a), & a \in \ker \mu,\\
    \mathrm{id}-\mu, & a\not\in \ker \mu,
\end{cases}
$$
where $\nu: \ker \mu \to \mathfrak{l}$ is any linear function. Consequently,
$$
\{a,b\} = \begin{cases}
    \nu(a) +\mu(b) +s, & a \in \ker \mu,\\
    a+\mu(b-a) +s, & a\not\in \ker \mu.
\end{cases} $$
\end{example}

Before turning to examples in dimensions equal or greater than 2, it will be essential to study a highly relevant subclass of Leibniz affgebras: homogeneous Leibniz affgebras with a Lie fibre.

\begin{lemma}\label{lie_fibre_conseq}
If $\mathfrak{a}$ is a homogeneous Leibniz affgebra with Lie fibre $(\mathfrak{g},[-,-])$, i.e., $\mathfrak{a}\simeq\mathfrak{a}(\mathfrak{g};\lambda,\mu,s)$, then the following statements hold:
\begin{enumerate}
\item[(i)] $\mu$ is a derivation of $\mathfrak{g}$.
\item[(ii)] $\kappa\coloneqq\lambda+\mu\in C(\g)$, where
$$
C(\g) = \{\kappa \in \mathrm{Lin}(\g)\; |\; \kappa ([a,b]) = [\kappa(a),b], \; \mbox{for all}\; a,b\in \g\}
$$
is the \textbf{centroid} of $\g$; see \cite[Chapter~X]{Jac:Lie}. This implies that
\begin{enumerate}
    \item[(ii.1)] $\kappa^{n}\in C(\mathfrak{g})$ for all $n\in\mathbb{N}$.
    \item[(ii.2)] For all $a\in\mathfrak{g}$, $\mathfrak{g}_{\kappa}(a)\coloneqq\operatorname{span}_{\mathbb{F}}\{\kappa^{n}(a)\colon n\in\mathbb{N}\}$ are abelian Lie subalgebras of $\mathfrak{g}$.
\end{enumerate}
\end{enumerate}
\end{lemma}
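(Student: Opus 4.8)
The plan is to read off all four statements from the homogeneous conditions \eqref{Leib_cond_th} of Proposition~\ref{imp_Leib}, using throughout that a Lie fibre has an antisymmetric bracket, $[a,b]=-[b,a]$ (equivalently $[a,a]=0$). First I would prove (i). Condition \eqref{Leib_cond_th2} reads $[\mu(a),b]-[\mu(b),a]=\mu([a,b])$, and rewriting the second term by antisymmetry as $-[\mu(b),a]=[a,\mu(b)]$ turns this at once into the derivation identity $\mu([a,b])=[\mu(a),b]+[a,\mu(b)]$, so $\mu\in\operatorname{Der}(\mathfrak{g})$.

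For (ii) I would add to this the rewriting of \eqref{Leib_cond_th1}, namely $\lambda([a,b])=[\lambda(a),b]-[a,\mu(b)]$. The two occurrences of $[a,\mu(b)]$ cancel, leaving $\kappa([a,b])=\lambda([a,b])+\mu([a,b])=[\lambda(a),b]+[\mu(a),b]=[\kappa(a),b]$, which is precisely the condition for $\kappa=\lambda+\mu$ to lie in the centroid $C(\g)$.

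For (ii.1) and (ii.2) I would first record the two-sided form of the centroid condition forced by antisymmetry: from $\kappa([a,b])=[\kappa(a),b]$ one gets $\kappa([a,b])=-\kappa([b,a])=-[\kappa(b),a]=[a,\kappa(b)]$, so any $\kappa\in C(\g)$ commutes with the bracket in each slot separately. Statement (ii.1) then follows by induction, since $\kappa^{n+1}([a,b])=\kappa(\kappa^{n}([a,b]))=\kappa([\kappa^{n}(a),b])=[\kappa^{n+1}(a),b]$; equivalently, $C(\g)$ is closed under composition as a subalgebra of $\operatorname{End}(\mathfrak{g})$, whence $\kappa^{n}\in C(\g)$ for all $n$. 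For (ii.2) I would compute $[\kappa^{m}(a),\kappa^{n}(a)]$ by extracting $\kappa^{m}$ from the left slot (one-sided property of $\kappa^{m}$) and $\kappa^{n}$ from the right slot (two-sided property of $\kappa^{n}$), reducing it to $\kappa^{m+n}([a,a])=0$ because $[a,a]=0$ in a Lie algebra. Hence each $\mathfrak{g}_{\kappa}(a)$ has identically vanishing bracket; being thereby trivially closed under the bracket, it is an abelian Lie subalgebra.

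None of these steps presents a serious obstacle, as the computations are short and linear. The only point requiring genuine care is the systematic exploitation of antisymmetry to upgrade the one-sided centroid identity to its two-sided form, since both (ii.1) and especially (ii.2) rest on being able to pull powers of $\kappa$ out of either argument of the bracket.
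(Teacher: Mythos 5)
Your proof is correct and takes essentially the same approach as the paper: (i) is read off from \eqref{Leib_cond_th2} plus antisymmetry, and (ii) follows by combining \eqref{Leib_cond_th1} with the derivation property of $\mu$ so that the $[a,\mu(b)]$ terms cancel (the paper runs the identical computation, merely starting from $[\kappa(a),b]$ rather than from $\kappa([a,b])$). The only difference is that the paper dismisses (ii.1) and (ii.2) as well-known with a citation to \cite[Lemma~3.2]{BRZ3}, whereas you supply the short induction and the antisymmetry-based upgrade of the centroid identity to its two-sided form explicitly --- both of which are correct.
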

\begin{proof}
Statement (i) is a direct consequence of \eqref{Leib_cond_th2} and antisymmetry of the Lie bracket $[-,-]$. Then, for all $a,b\in\g$,
\begin{align*}
[\kappa(a),b]=[\lambda(a),b]+[\mu(a),b]=\lambda([a,b])+[a,\mu(b)]+[\mu(a),b]=\lambda([a,b])+\mu([a,b])=\kappa([a,b]),
\end{align*}
i.e., $\kappa\in C(\g)$, where the second equality follows by \eqref{Leib_cond_th1} and the third equality is a consequence of the fact that $\mu\in\operatorname{Der}(\g)$. The remaining assertions are well-known (see e.g.\ \cite[Lemma~3.2]{BRZ3}).
\end{proof}
\begin{theorem}\label{semisimplefibre}
Let $\mathbb{F}$ be an algebraically closed field. If $(\g,[-,-])$ is a simple Lie algebra, then every homogeneous Leibniz affgebra $\mathfrak{a}$ with Lie fibre  $\g$ is isomorphic to $\mathfrak{a}(\g;\alpha\,\mathrm{id},0,0)$ for some $\alpha\in\mathbb{F}$. 
\end{theorem}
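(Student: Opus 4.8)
The plan is to reduce the problem to normalising a finite amount of linear data and then to apply the isomorphism criterion of Corollary~\ref{iso_charac}. By Proposition~\ref{imp_Leib} I may assume $\mathfrak{a}=\mathfrak{a}(\g;\lambda,\mu,s)$ for linear endomorphisms $\lambda,\mu$ of $\g$ and some $s\in\g$ subject to \eqref{Leib_cond_th}. Lemma~\ref{lie_fibre_conseq} already gives that $\mu\in\operatorname{Der}(\g)$ and that $\kappa\coloneqq\lambda+\mu$ lies in the centroid $C(\g)$. The strategy is then to pin down $\mu$, $\kappa$ and $s$ using the structure theory of simple Lie algebras, and afterwards to exhibit an explicit base point $q\in\g$ (with $\psi=\mathrm{id}$) that transports $(\lambda,\mu,s)$ to $(\alpha\,\mathrm{id},0,0)$.

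The structural inputs come first. Since $\g$ is simple, its adjoint action on itself is irreducible (submodules are ideals), so over an algebraically closed field Schur's lemma forces $C(\g)=\mathbb{F}\,\mathrm{id}$; hence $\kappa=\alpha\,\mathrm{id}$ for some $\alpha\in\mathbb{F}$ and, in particular, $\lambda=\alpha\,\mathrm{id}-\mu$. Moreover every derivation of $\g$ is inner, so $\mu=\operatorname{ad}_t=[-,t]$ for some $t\in\g$, and the centre of $\g$ is trivial. I expect these two facts — that the centroid collapses to scalars and that $\operatorname{Der}(\g)=\operatorname{ad}(\g)$ — to be the real content of the theorem. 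The inner-derivation statement in particular is where the hypotheses bite: it is Whitehead's lemma and would fail for simple Lie algebras admitting outer derivations, so this is the main obstacle and the step that implicitly requires characteristic zero.

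Next I determine $s$. Substituting $\mu=\operatorname{ad}_t$ and $\lambda=\alpha\,\mathrm{id}-\operatorname{ad}_t$ into the homogeneity condition \eqref{Leib_cond_th3}, the quadratic terms collapse, $\mu^{2}+\lambda\mu=\operatorname{ad}_t^{2}+(\alpha\,\mathrm{id}-\operatorname{ad}_t)\operatorname{ad}_t=\alpha\operatorname{ad}_t$, so \eqref{Leib_cond_th3} becomes $[s,a]=(\alpha-1)[a,t]$ for all $a$, i.e.\ $[\,s-(1-\alpha)t,\,a\,]=0$ for all $a\in\g$. Triviality of the centre then forces $s=(1-\alpha)t$.

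Finally I apply Corollary~\ref{iso_charac} with $\psi=\mathrm{id}$ and $q=-t$. The transformed right endomorphism is $\mu'=\mu-[q,-]=\operatorname{ad}_t+[t,-]=0$ by antisymmetry; the transformed left endomorphism is $\lambda'=\lambda-\operatorname{ad}_q=\lambda+\operatorname{ad}_t=\lambda+\mu=\alpha\,\mathrm{id}$; and the transformed constant is
$$
s'=s-\kappa(q)+\operatorname{ad}_q(q)+q=(1-\alpha)t+\alpha t+0-t=0,
$$
using $\operatorname{ad}_q(q)=[q,q]=0$. Hence $\mathfrak{a}\cong\mathfrak{a}(\g;\alpha\,\mathrm{id},0,0)$, as claimed. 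Once $\mu$ is known to be inner, the remaining manipulation — killing $\mu$ by the change of base point $q=-t$ and then reading off $\lambda'=\kappa=\alpha\,\mathrm{id}$ and $s'=0$ — is a direct computation.
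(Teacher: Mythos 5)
Your proposal is correct and follows essentially the same route as the paper's proof: both rely on Lemma~\ref{lie_fibre_conseq}, the fact that all derivations of a simple Lie algebra are inner, Schur's lemma to collapse the centroid to $\mathbb{F}\,\mathrm{id}$, and Corollary~\ref{iso_charac} with $\psi=\mathrm{id}$ and $q$ equal to minus the element implementing $\mu$. The only cosmetic difference is that you pin down $s=(1-\alpha)t$ from \eqref{Leib_cond_th3} before the change of base point, whereas the paper performs the transformation first and then applies \eqref{Leib_cond_th3} to the transformed data to conclude $s'\in\mathcal{Z}(\g)=0$.
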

\begin{proof}
Let $\mathfrak{a}\simeq\mathfrak{a}(\g;\lambda,\mu,s)$. By Corollary \ref{iso_charac} and antisymmetry of $[-,-]$, $\mathfrak{a}(\g;\lambda,\mu,s)$ is isomorphic to $\mathfrak{a}(\g;\lambda',\mu',s')$ if and only if
\begin{gather}\label{eq12eq}
s'=s-(\lambda+\mu)(q)+q,\qquad
\mu'=\mu+\operatorname{ad}_{q},\qquad
\lambda'=\lambda-\operatorname{ad}_{q},
\end{gather}
for $q\in\g$. Since $\g$ is simple and $\mu$ is a derivation,  $\mu=\operatorname{ad}_{z}$ for some $z\in\g$ because  all derivations of a simple Lie algebra are inner derivations. By setting $q=-z$, equations \eqref{eq12eq} become
\begin{gather*}\label{eq15eq}
s'=s+\lambda(z)-z,\qquad
\mu'=0,\qquad\lambda'=\lambda+\mu=\kappa.
\end{gather*}
By (ii) of Lemma \ref{lie_fibre_conseq}, $\kappa\in C(\g)$, so that $\kappa\circ \operatorname{ad}_{a}=\operatorname{ad}_{a}\circ\kappa$, for all $a\in\g$. Then, since $\mathbb{F}$ is an algebraically closed field, Schur's Lemma affirms that $\kappa=\alpha\, \mathrm{id}$ for some $\alpha\in\mathbb{F}$, and then 
$\lambda'=\kappa=\alpha\,\mathrm{id}.$
Furthermore, since the Leibniz affgebra $\mathfrak{a}(\g;\alpha\,\mathrm{id},0,s')$ satisfies \eqref{Leib_cond_th3}, we obtain that $s'\in\mathcal{Z}(\g)$. However, $\mathcal{Z}(\g)=0$ because $\g$ is simple, and so $s'=0$. Thus, we conclude that
\[\mathfrak{a}\simeq\mathfrak{a}(\g;\lambda,\mu,s)\simeq\mathfrak{a}(\g;\alpha\,\mathrm{id},0,0),\]
as asserted.
\end{proof}
\begin{proposition}\label{nontriv_subal}
Let $\mathfrak{a}$ be a homogeneous Leibniz affgebra with a Lie fibre $(\g,[-,-])$ which does not contain non-trivial abelian subalgebras. Then, $\mathfrak{a}$ is isomorphic to $\mathfrak{a}(\mathfrak{g};\alpha\, \mathrm{id}-\mu,\mu,s)$,
$\alpha\in\mathbb{F}$.
\end{proposition}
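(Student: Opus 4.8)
The plan is to reduce the whole statement to the single assertion that the centroid element $\kappa\coloneqq\lambda+\mu$ is a scalar multiple of the identity. First I would invoke Theorem~\ref{mainth_phi_gen} to fix a presentation $\mathfrak{a}\simeq\mathfrak{a}(\g;\lambda,\mu,s)$ for suitable linear endomorphisms $\lambda,\mu$ of $\g$ and an element $s\in\g$. Since the fibre $\g$ is a Lie algebra, Lemma~\ref{lie_fibre_conseq} applies directly: $\kappa=\lambda+\mu$ lies in the centroid $C(\g)$, and for every $a\in\g$ the subspace $\g_{\kappa}(a)=\operatorname{span}_{\mathbb{F}}\{\kappa^{n}(a)\colon n\in\mathbb{N}\}$ is an abelian Lie subalgebra of $\g$. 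Establishing $\kappa=\alpha\,\mathrm{id}$ immediately yields $\lambda=\alpha\,\mathrm{id}-\mu$, and hence the claimed form, with no further isomorphism bookkeeping required.

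Next I would bring in the hypothesis. As $\g$ contains no non-trivial abelian subalgebra, each $\g_{\kappa}(a)$ is at most one-dimensional. Because $\g_{\kappa}(a)$ always contains both $a$ and $\kappa(a)$, this forces $\kappa(a)\in\mathbb{F}a$ for every $a\in\g$: for $a\neq 0$ the subalgebra $\g_{\kappa}(a)$ must coincide with the line $\mathbb{F}a$, so $\kappa(a)\in\mathbb{F}a$, while for $a=0$ the inclusion is vacuous. The remaining linear-algebra step is to promote this pointwise scaling to a global scalar: writing $\kappa(a)=c_{a}\,a$ and comparing $\kappa(u+v)=c_{u+v}(u+v)$ with $\kappa(u)+\kappa(v)=c_{u}u+c_{v}v$ for linearly independent $u,v$ gives $c_{u}=c_{u+v}=c_{v}$, so a single scalar $\alpha\in\mathbb{F}$ works for all vectors (the case $\dim\g\leq 1$ being immediate). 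This deduction is valid over an arbitrary field, which matches the standing generality on $\mathbb{F}$.

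Finally I would substitute back: from $\kappa=\lambda+\mu=\alpha\,\mathrm{id}$ we obtain $\lambda=\alpha\,\mathrm{id}-\mu$, whence $\mathfrak{a}\simeq\mathfrak{a}(\g;\lambda,\mu,s)=\mathfrak{a}(\g;\alpha\,\mathrm{id}-\mu,\mu,s)$, exactly as asserted. I do not anticipate a serious obstacle in this argument; the only points demanding care are the correct reading of \emph{non-trivial abelian subalgebra} as one of dimension at least two (every line being trivially an abelian subalgebra, and the target affgebra itself producing one-dimensional $\g_{\kappa}(a)$), and the verification that the pointwise-scalar argument does not secretly use any field-specific property. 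Neither of these requires the isomorphism classification of Corollary~\ref{iso_charac}, so the proof is considerably shorter than that of Theorem~\ref{semisimplefibre}.
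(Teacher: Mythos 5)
Your proposal is correct, and it diverges from the paper's proof precisely at the step where the pointwise eigenvalues are glued into a single scalar. The paper fixes a basis $\{a_i\}$ of $\g$, applies Lemma~\ref{lie_fibre_conseq} only to the basis vectors to get $\kappa(a_i)=\kappa_i a_i$, and then forces $\kappa_i=\kappa_j$ by combining the centroid property $\kappa\in C(\g)$ with antisymmetry of the Lie bracket and the observation that $[a_i,a_j]\neq 0$ for $i\neq j$ (otherwise $\operatorname{span}_{\mathbb{F}}\{a_i,a_j\}$ would be a forbidden two-dimensional abelian subalgebra). You instead exploit the fact that Lemma~\ref{lie_fibre_conseq}(ii.2) holds for \emph{every} $a\in\g$, not just basis vectors, so $\kappa(a)\in\mathbb{F}a$ for all $a$, and then finish with the purely linear-algebraic fact that an endomorphism preserving every line is a scalar multiple of the identity ($c_u=c_{u+v}=c_v$ for independent $u,v$). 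Your route is more elementary at this stage: it needs neither the centroid identity nor the nonvanishing of brackets between basis elements, and it works verbatim over any field, whereas the paper's computation \eqref{qz} leans on the algebraic structure a second time. Both proofs correctly read \emph{non-trivial abelian subalgebra} as one of dimension at least two, and both conclude with the same trivial substitution $\lambda=\alpha\,\mathrm{id}-\mu$ without invoking Corollary~\ref{iso_charac}, so your observation on that point matches the paper as well.
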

\begin{proof}
Let $\mathfrak{a}\simeq\mathfrak{a}(\mathfrak{g};\lambda,\mu,s)$ with $\{a_{i}\}$ a basis for $\g$. By Lemma \ref{lie_fibre_conseq}, the subalgebras
$\g_{\kappa}(a_{i})$ of $\g$ are abelian, and then, as a consequence of the fact that $\g$ does not contain non-trivial abelian subalgebras, $\g_{\kappa}(a_{i})$ are 1-dimensional. Hence, $\g_{\kappa}(a_{i})=\mathbb{F}a_{i}$, and so $\kappa(a_{i})=\kappa_{i}a_{i}$,  for some $\kappa_{i}\in\mathbb{F}$. In addition, by Lemma \ref{lie_fibre_conseq} we know that $\kappa\in C(\g)$, and then 
\begin{align}\label{qz}
\kappa_{i}[a_{i},a_{j}]=&[\kappa_{i}a_{i},a_{j}]=[\kappa(a_{i}),a_{j}]=\kappa([a_{i},a_{j}])\\\nonumber=&-\kappa([a_{j},a_{i}])=-[\kappa(a_{j}),a_{i}]=-[\kappa_{j}a_{j},a_{i}]=-\kappa_{j}[a_{j},a_{i}]=\kappa_{j}[a_{i},a_{j}].
\end{align}
Note that if $i\neq j$, then $[a_{i},a_{j}]\neq 0$, as should $[a_{i},a_{j}]=0$, then $\operatorname{span}_{\mathbb{F}}\{a_{i},a_{j}\}$ would be a 2-dimensional abelian subalgebra of $\g$ in direct contradiction to the assumption. Thus, by \eqref{qz}, $\kappa_{i}=\kappa_{j}$ for all $i\neq j$. So, we conclude that $\kappa(a_{i})=\alpha a_{i}$ for some $\alpha\in\mathbb{F}$, and then $\kappa=\alpha\, \mathrm{id}$.
\end{proof}
Theorem \ref{semisimplefibre} and Proposition \ref{nontriv_subal} significantly simplify the study of certain specific examples of Leibniz affgebras with Lie fibres, thereby facilitating their classification, as we shall see below.

In the following collection of examples we will focus on examples of 2-dimensional homogeneous Leibniz affgebras. Cuvier provided a complete classification of 2-dimensional Leibniz algebras in \cite[Lemme 2]{CUV}, where he proved that, up to isomorphisms, there are exactly four such algebras:
\begin{itemize}
    \item $L_{1}$, the abelian Leibniz algebra.
    \item $L_{2}$, the Borel subalgebra of the Lie algebra $sl_2(\mathbb{F})$, with table of multiplication given by
\[[e_{1},e_{1}]=0=[e_{2},e_{2}],\quad [e_{1},e_{2}]=-[e_{2},e_{1}]=e_{1}.\]
    \item The Leibniz algebra $L_{3}$, with table of multiplication given by
\[[e_{1},e_{1}]=[e_{2},e_{2}]=[e_{2},e_{1}]=0,\quad [e_{1},e_{2}]=e_{1}.\]
    \item The Leibniz algebra $L_{4}^{\xi}$, with table of multiplication given by
\[[e_{1},e_{1}]=[e_{1},e_{2}]=[e_{2},e_{1}]=0,\quad[e_{2},e_{2}]=\xi e_{1},\]
where $\xi\in\mathbb{F}/\mathbb{F}^{2}$.  
\end{itemize}
Clearly, $\xi\neq0$ since $L_{4}^{0} =L_1$. The parameter $\xi$ can be normalized to 1. Indeed, the linear map $\psi\colon L_{4}^{\xi}\rightarrow L_{4}^{1}$ given by the matrix $\begin{pmatrix}\frac{\delta^{2}}{\xi}&\gamma\\0&\delta\end{pmatrix}$, with $\delta\neq 0$, is a Leibniz algebra isomorphism. Consequently, $L_{4}^{\xi}$ and $L_{4}^{1}$ belong to the same isomorphism class. Therefore, without loss of generality, we will assume $\xi=1$  and write $L_4$ for $L_{4}^{1}$.

Any 2-dimensional homogeneous Leibniz affgebra must have (up to isomorphisms) one of these Leibniz algebras as its fibre. The abelian case has already been discussed in Example \ref{example_abelian_fibre}, so we will focus on the other cases.
\begin{example}[Homogeneous Leibniz affgebras fibred by $L_{2}$]\label{L2.2dim.ex}
    Note that $L_{2}$ is a Lie algebra which does not contain non-trivial abelian subalgebras. Thus, by Proposition \ref{nontriv_subal}, every homogeneous Leibniz affgebra fibred by $L_{2}$ is of the form $\mathfrak{a}(L_{2};\alpha\,\mathrm{id}-\mu,\mu,s)$ for $\alpha\in\mathbb{F}$, $s=s_{1}e_{1}+s_{2}e_{2},$ and $\mu=\begin{pmatrix}\mu_1&\mu_2\\\mu_3&\mu_4\end{pmatrix}.$ By Proposition \ref{imp_Leib}, $\mathfrak{a}(L_{2};\alpha\,\mathrm{id}-\mu,\mu,s)$ is a Leibniz affgebra if and only if conditions \eqref{Leib_cond_th} hold, which, in this particular case, reduces to the following two equalities:
    \begin{subequations}\label{L2conditions}
    \begin{gather}
        \label{L2cond1} [\mu(a),b]+[a,\mu(b)]=\mu([a,b]),\\
        \label{L2cond2} [s,a]=(\alpha-1)\mu(a)
    \end{gather}
    \end{subequations}
    for all $a,b\in L_{2}$. Given the definition of the Lie bracket on $L_{2}$, it is straightforward to check that conditions \eqref{L2conditions} hold if and only if 
    \[\mu=\begin{pmatrix}\mu_1&\mu_2\\0&0\end{pmatrix}, \quad s=\begin{pmatrix}(\alpha-1)\mu_{2}\\(1-\alpha)\mu_{1}\end{pmatrix},\]
    where the constraint $\mu_{3}=0=\mu_{4}$ follows from setting $a=e_{1}$ and $b=e_{2}$ in \eqref{L2cond1}, while the equalities $s_{1}=(\alpha-1)\mu_{2}$ and $s_{2}=(1-\alpha)\mu_{1}$ follow from taking $a=e_{1}$ and $a=e_{2}$ in \eqref{L2cond2}, respectively.

    One can easily show that an automorphism $\psi$ of $L_{2}$ has a matrix of the form $\psi=\begin{pmatrix}
        \theta & \delta\\0&1
    \end{pmatrix}$ with $\theta\neq 0$. Then, by applying Corollary \ref{iso_charac} with $q=q_{1}e_{1}+q_{2}e_{2}$, and taking into account that $\operatorname{ad}_{q}=\begin{pmatrix}
        q_{2}&-q_{1}\\0&0
    \end{pmatrix}$, it follows that $\mathfrak{a}(L_{2};\alpha\,\mathrm{id}-\mu,\mu,s)$ is isomorphic to $\mathfrak{a}(L_{2};\lambda',\mu',s')$ if and only if, 
    \begin{gather*}\lambda'=\begin{pmatrix}\alpha-(\mu_{1}+q_{2})&\delta(\mu_{1}+q_{2})+\theta(-\mu_{2}+q_{1})\\0&\alpha\end{pmatrix},\quad \mu'=\begin{pmatrix}\mu_{1}+q_{2}&-\delta(\mu_{1}+q_{2})+\theta(\mu_{2}-q_{1})\\0&0\end{pmatrix},\\ s'=\psi\begin{pmatrix}(1-\alpha)(-\mu_{2}+q_{1})\\(1-\alpha)(\mu_{1}+q_{2})\end{pmatrix}.\end{gather*}
    Hence, by setting $q_{1}=\mu_{2}$ and $q_{2}=-\mu_{1}$, we obtain that $\lambda'=\alpha\, \mathrm{id}$, $\mu'=0$ and $s'=0$, which leads to the conclusion that
    \[\mathfrak{a}(L_{2};\alpha\,\mathrm{id}-\mu,\mu,s)\simeq \mathfrak{a}(L_{2};\alpha\,\mathrm{id},0,0).\]
\end{example}

\begin{example}[Homogeneous Leibniz affgebras fibred by $L_{3}$]\label{L3.2dim.ex}
To determine all homogeneous Leibniz affgebras fibred by $L_{3}$, we begin by identifying the requirements that the endomorphisms $\lambda=\begin{pmatrix}
    \lambda_{1}&\lambda_{2}\\
    \lambda_{3}&\lambda_{4}
\end{pmatrix}$ and $\mu=\begin{pmatrix}\mu_1&\mu_2\\\mu_3&\mu_4\end{pmatrix}$ and the element $s=\begin{pmatrix}
    s_{1}\\s_{2}
\end{pmatrix}$ must satisfy in order to fulfill the conditions \eqref{Leib_cond_th}. It is straightforward to verify that $\mathfrak{a}(L_{3};\lambda,\mu,s)$ is a homogeneous Leibniz affgebra if and only if 
\[\lambda=\begin{pmatrix}
    \lambda_{1}&0\\0&\lambda_{4}
\end{pmatrix},\quad\mu=\begin{pmatrix}0&\mu_2\\0&0\end{pmatrix}, \quad s=\begin{pmatrix}\mu_{2}(\lambda_{1}-1)\\s_{2}\end{pmatrix},\]
where the resulting constraints arise from specific substitutions in conditions \eqref{Leib_cond_th}. Setting $a=e_{1}=b$ in \eqref{Leib_cond_th1} yields $\mu_{3}=0$; taking $a=e_{1}$ and $b=e_{2}$ in the same condition gives $\mu_{4}=0$ and $\lambda_{3}=0$; while choosing $a=e_{2}=b$ implies $\lambda_{2}=0$. From \eqref{Leib_cond_th2}, setting $a=e_{2}$ and $b=e_{1}$ leads to $\mu_{1}=0$; and finally, taking $a=e_{2}$ in \eqref{Leib_cond_th3} yields $s_{1}=\mu_{2}(\lambda_{1}-1)$. 

It is easy to see that an automorphism of $L_{3}$ is of the form $\psi=\begin{pmatrix}
    \theta&0\\0&1
\end{pmatrix}$ with $\theta\neq 0$. Hence, by applying Corollary \ref{iso_charac} with $q=q_{1}e_{1}+q_{2}e_{2}$ and noting that $\operatorname{ad}_{q}=\begin{pmatrix}
    q_{2}&0\\0&0
\end{pmatrix}$ and $[q,-]=\begin{pmatrix}
    0&q_{1}\\0&0
\end{pmatrix}$, we obtain that $\mathfrak{a}(L_{3};\lambda,\mu,s)$ is isomorphic to $\mathfrak{a}(L_{3};\lambda',\mu',s')$ if and only if
\[\lambda'=\begin{pmatrix}
    \lambda_{1}-q_{2}&0\\0&\lambda_{4}
\end{pmatrix},\quad \mu'=\begin{pmatrix}
    0&\theta(\mu_{2}-q_{1})\\0&0
\end{pmatrix},\quad s'=\psi\begin{pmatrix}
    (\mu_{2}-q_{1})(-1+\lambda_{1}-q_{2})\\s_{2}+q_{2}(1-\lambda_{4})
\end{pmatrix}.\]
As a consequence, if we set $q_{1}=\mu_{2}$ and $q_{2}=\lambda_{1}$, then 
\[
\mathfrak{a}(L_{3};\lambda,\mu,s)\simeq \mathfrak{a}(L_{3};\lambda',0,s_{2}'e_{2}),
\]
where $\lambda'=\begin{pmatrix}
 0&0\\0&\lambda_{4}
\end{pmatrix}$ and $s_{2}'=s_{2}+\lambda_{1}(1-\lambda_{4})$.
\end{example}

\begin{example}[Homogeneous Leibniz affgebras fibred by $L_{4}$]\label{L4.2dim.ex}
The process we will follow to study the homogeneous Leibniz affgebras fibred by $L_{4}$ is analogous to that carried out in Example \ref{L3.2dim.ex}. To begin with, $\mathfrak{a}(L_{4};\lambda,\mu,s)$ is a homogeneous Leibniz affgebra, i.e., $\lambda=\begin{pmatrix}\lambda_1&\lambda_2\\\lambda_3&\lambda_4\end{pmatrix}$, $\mu=\begin{pmatrix}\mu_1&\mu_2\\\mu_3&\mu_4\end{pmatrix}$ and $s=\begin{pmatrix}
    s_{1}\\s_{2}
\end{pmatrix}$ satisfy conditions \eqref{Leib_cond_th}, if and only if 
\[\lambda=\begin{pmatrix}\lambda_1&\lambda_2\\0&\lambda_1\end{pmatrix},\quad \mu=\begin{pmatrix}0&\mu_2\\0&0\end{pmatrix},\quad s=\begin{pmatrix}
    s_{1}\\\mu_{2}(\lambda_{1}-1)
\end{pmatrix},\]
where the constraints obtained follow from specific instances of conditions \eqref{Leib_cond_th}. Setting  $a = e_2 = b$ in \eqref{Leib_cond_th1} yields $\lambda_3 = 0$ and $\mu_4 = \lambda_4 - \lambda_1$, while taking  $a = e_2$, $b = e_1$ in the same condition gives $\mu_3 = 0$. From \eqref{Leib_cond_th2}, choosing $a = e_2 = b$ leads to $\mu_1 = 0$, and finally, substituting $a = e_2$ into \eqref{Leib_cond_th3} it is obtained that $s_{2}=\mu_{2}(\lambda_{1}-1)$ and $\lambda_{1}=\lambda_{4}$. As a consequence, $\mu_{4}=0$.

Now, one easily obtain that automorphisms of $L_{4}$ are of the form $\psi=\begin{pmatrix}
    \theta^{2}&\delta\\0&\theta
\end{pmatrix}$ with $\theta\neq 0$, and then, by applying Corollary \ref{iso_charac}, $\mathfrak{a}(L_{4};\lambda,\mu,s)$ is isomorphic to $\mathfrak{a}(L_{4};\lambda',\mu',s')$ if and only if 
\[\lambda'=\begin{pmatrix}
    \lambda_{1} & \theta(\lambda_{2}- q_{2})\\0&\lambda_{1}
\end{pmatrix},\quad\mu'=\begin{pmatrix}
    0 & \theta(\mu_{2}- q_{2})\\0&0
\end{pmatrix}.\]
By choosing $q_{2}=\mu_{2}$, it implies that $\mu'=0$ and $\lambda'=\begin{pmatrix}
    \lambda_{1} & \theta(\lambda_{2}-\mu_{2})\\0&\lambda_{1}
\end{pmatrix}$. In case $\mu_{2}=\lambda_{2}$, $\lambda'=\lambda_{1}\,\mathrm{id}$ and it is easy to show that $s'=\theta^{2}\left(s_{1}-\lambda_{2}^{2}\right)e_{1}$ by setting $q_{1}=0$. Hence, $s'=\Lambda e_{1}$, where $\Lambda$ is any element in the square class of $s_{1}-\lambda_{2}^{2}$.
When $\mu_{2}\neq \lambda_{2}$, by setting $\theta=\frac{1}{\lambda_{2}-\mu_{2}}$, $\lambda'=\begin{pmatrix}
    \lambda_{1}&1\\0&\lambda_{1}
\end{pmatrix}$ and $s'=\left(\frac{ s_{1}-\lambda_{2}\mu_{2}}{(\mu_{2}-\lambda_{2})^{2}}\right)e_{1}$ for $q_{1}=0$. 
Then, we conclude the following:
\[\mathfrak{a}(L_{4};\lambda,\mu,s)\simeq\begin{cases}
\mathfrak{a}\left(L_{4};\lambda_{1}\,\mathrm{id},0,\Lambda e_{1}\right),&\lambda_{2}=\mu_{2},\\
    \mathfrak{a}(L_{4};\lambda',0,\Omega\,e_{1}),& \lambda_{2}\neq\mu_{2},
\end{cases}\]
where $\Lambda$ is any element in the square class of $s_{1}-\lambda_{2}^{2}$ or 0, $\lambda'=\begin{pmatrix}
    \lambda_{1}&1\\0&\lambda_{1}
\end{pmatrix}$ and $\Omega=\frac{ s_{1}-\lambda_{2}\mu_{2}}{(\mu_{2}-\lambda_{2})^{2}}$.
\end{example}

We give now two examples of 3-dimensional homogeneous Leibniz affgebras.
\begin{example}[Homogeneous Leibniz affgebra fibred by $\mathfrak{sl}(2,\mathbb{C})$]
    For this example we will consider that $\mathbb{F}=\mathbb{C}$ the field of complex numbers. It is well-known that $\mathfrak{sl}(2,\mathbb{C})$ is a complex simple Lie algebra, and then, any homogeneous Leibniz affgebra with fibre $\mathfrak{sl}(2,\mathbb{C})$, $\mathfrak{a}(\mathfrak{sl}(2,\mathbb{C});\lambda,\mu,s)$, is automatically isomorphic to $\mathfrak{a}(\mathfrak{sl}(2,\mathbb{C});\alpha\, \mathrm{id},0,0)$ by Theorem \ref{semisimplefibre}.
\end{example}

\begin{example}[Homogeneous 3-dimensional Leibniz affgebra fibred by a non-Lie, Leibniz algebra]
    In the present example again we consider $\mathbb{F}=\mathbb{C}$. We aim to determine the conditions that the maps $\lambda$, $\mu$ and the vector $s$ must satisfy in order for $\mathfrak{a}(L_{7};\lambda,\mu,s)$ to define a homogeneous Leibniz affgebra, where $L_{7}$ denotes the non-Lie Leibniz algebra whose multiplication table is given by:
    \[[e_{1},e_{2}]=[e_{1},e_{3}]=[e_{3},e_{2}]=[e_{3},e_{3}]=e_{1};\]
    see \cite[Table 3.1]{AOR}. In other words, if $\lambda,$ $\mu$ and $s$ are denoted by
\begin{gather*}
    \lambda=\begin{pmatrix}
        \lambda_{1}&\lambda_{2}&\lambda_{3}\\\lambda_{4}&\lambda_{5}&\lambda_{6}\\\lambda_{7}&\lambda_{8}&\lambda_{9}
    \end{pmatrix},\quad\mu=\begin{pmatrix}
        \mu_{1}&\mu_{2}&\mu_{3}\\\mu_{4}&\mu_{5}&\mu_{6}\\\mu_{7}&\mu_{8}&\mu_{9}
    \end{pmatrix},\quad s=\begin{pmatrix}
        s_{1}\\s_{2}\\s_{3}
    \end{pmatrix},
\end{gather*}
then we are thus interested in determining under which conditions the identities \eqref{Leib_cond_th} hold. 

First of all let us assume that \eqref{Leib_cond_th} hold. The following constraints are obtained by evaluating the conditions \eqref{Leib_cond_th1}, \eqref{Leib_cond_th2} and \eqref{Leib_cond_th3} on the canonical basis $\{e_1, e_2, e_3\}$ of $L_7$.
From \eqref{Leib_cond_th1}, we obtain:
\begin{itemize}
    \item[(i)] For $a = e_1 = b$, it follows that $\mu_7 = -\mu_4$.
    \item[(ii)] For $a = e_2 = b$, we deduce $\lambda_8 = -\lambda_2$.
    \item[(iii)] Taking $a = e_3 = b$ yields $\lambda_{4}=0=\lambda_{7}$ and $\lambda_1 = \lambda_3 + \lambda_9 - \mu_6 - \mu_9$.
    \item[(iv)] Setting $a = e_1$, $b = e_2$ gives $\mu_8 = -\mu_5$.
    \item[(v)] With $a = e_1$, $b = e_3$, we get $\mu_9 = -\mu_6$. Substituting into (iii), we obtain the simplified relation $\lambda_1 = \lambda_3 + \lambda_9$.
\end{itemize}
From \eqref{Leib_cond_th2} we derive:
\begin{itemize}
    \item[(vi)] For $a = e_3 = b$, we have $\mu_1 = 0=\mu_{4}$.
    \item[(vii)] For $a = e_2$, $b = e_3$, the identity \begin{gather}\label{c01}\mu_2 - \mu_5 = \mu_3 - \mu_6\end{gather} must hold.
\end{itemize}
Finally, from \eqref{Leib_cond_th3}, we obtain:
\begin{itemize}
    \item[(viii)] For $a = e_2$:
    \begin{subequations}\label{c02}\begin{gather}
        s_1 + s_3 = \mu_2(\lambda_3 + \lambda_9 - 1) + \mu_5(\lambda_2 + \mu_2 - \lambda_3 - \mu_3), \\
        \mu_5(\lambda_5 - \lambda_6 - 1 + \mu_5 - \mu_6) = 0, \\
        \mu_5(\lambda_2 + \lambda_9 - 1 + \mu_5 - \mu_6) = 0.
    \end{gather}\end{subequations}
    
    \item[(ix)] For $a = e_3$:
    \begin{subequations}\label{c03}\begin{gather}
        s_1 + s_3 = \mu_3(\lambda_3 + \lambda_9 - 1) + \mu_6(\lambda_2 + \mu_2 - \lambda_3 - \mu_3), \\
        \mu_6(\lambda_5 - \lambda_6 - 1 + \mu_5 - \mu_6) = 0, \\
        \mu_6(\lambda_2 + \lambda_9 - 1 + \mu_5 - \mu_6) = 0.
    \end{gather}
    \end{subequations}
\end{itemize}
The conditions obtained above are only necessary; however, it is straightforward to verify that, in this case, they are also sufficient. In conclusion, $\mathfrak{a}(L_{7};\lambda,\mu,s)$ is a homogeneous Leibniz affgebra if and only if
\begin{gather*}
    \lambda=\begin{pmatrix}
        \lambda_{3}+\lambda_{9}&\lambda_{2}&\lambda_{3}\\0&\lambda_{5}&\lambda_{6}\\0&-\lambda_{2}&\lambda_{9}
    \end{pmatrix},\quad\mu=\begin{pmatrix}
        0&\mu_{2}&\mu_{3}\\0&\mu_{5}&\mu_{6}\\0&-\mu_{5}&-\mu_{6}
    \end{pmatrix},
\end{gather*}
subject to \eqref{c01}, \eqref{c02} and \eqref{c03}.

Assuming certain conditions on the parameters, five different families of Leibniz affgebras with fibre $L_{7}$ are obtained:
\begin{gather}\tag{F1}\label{f1l7}
    \lambda=\begin{pmatrix}
        \lambda_{3}+\lambda_{9}&\lambda_{2}&\lambda_{3}\\0&\lambda_{5}&\lambda_{6}\\0&-\lambda_{2}&\lambda_{9}
    \end{pmatrix},\quad\mu=\begin{pmatrix}
        0&\mu_{2}&\mu_{2}\\0&0&0\\0&0&0
    \end{pmatrix}\quad \text{ if } \mu_{5}=0=\mu_{6}
\end{gather}
subject to $$s_{1}+s_{3}=\mu_{2}(\lambda_{3}+\lambda_{9}-1),$$
\begin{gather}\tag{F2}\label{f2l7}
    \lambda=\begin{pmatrix}
        \lambda_{3}+\lambda_{9}&\lambda_{2}&\lambda_{3}\\0&\lambda_{5}&\lambda_{6}\\0&-\lambda_{2}&\lambda_{9}
    \end{pmatrix},\quad\mu=\begin{pmatrix}
        0&\mu_{2}&\mu_{3}\\0&0&\lambda_{2}+\lambda_{9}-1\\0&0&1-\lambda_{2}-\lambda_{9}
    \end{pmatrix}\quad\text{ if } \mu_{5}=0,\,\mu_{6}\neq0
\end{gather}
subject to
\begin{gather*}
    \mu_{3}-\mu_{2}=\lambda_{2}+\lambda_{9}-1,\qquad
    s_{1}+s_{3}=\mu_{2}(\lambda_{3}+\lambda_{9}-1),\qquad
    \lambda_{2}-\lambda_{5}+\lambda_{6}+\lambda_{9}=0,
\end{gather*}
\begin{gather}\tag{F3}\label{f3l7}
    \lambda=\begin{pmatrix}
        \lambda_{3}+\lambda_{9}&\lambda_{2}&\lambda_{3}\\0&\lambda_{5}&\lambda_{6}\\0&-\lambda_{2}&\lambda_{9}
    \end{pmatrix},\quad\mu=\begin{pmatrix}
        0&\mu_{2}&\mu_{3}\\0&1-\lambda_{2}-\lambda_{9}&0\\0&\lambda_{2}+\lambda_{9}-1&0
    \end{pmatrix}\quad \text{ if }\mu_{5}\neq0,\,\mu_{6}=0
\end{gather}
subject to
\begin{gather*}
    \mu_{2}-\mu_{3}=1-\lambda_{2}-\lambda_{9},\qquad
    s_{1}+s_{3}=\mu_{3}(\lambda_{3}+\lambda_{9}-1),\qquad
    \lambda_{2}-\lambda_{5}+\lambda_{6}+\lambda_{9}=0,
\end{gather*}
\begin{gather}\tag{F4}\label{f4l7}
    \lambda=\begin{pmatrix}
        \lambda_{3}-\lambda_{2}+1&\lambda_{2}&\lambda_{3}\\0&\lambda_{5}&\lambda_{5}-1\\0&-\lambda_{2}&1-\lambda_{2}
    \end{pmatrix},\quad\mu=\begin{pmatrix}
        0&\mu_{2}&\mu_{2}\\0&\mu_{5}&\mu_{5}\\0&-\mu_{5}&-\mu_{5}
    \end{pmatrix}\quad \text{ if }\begin{array}{cc}
       \mu_{5},\,\mu_{6}\neq 0 \\
     \mu_{2}=\mu_{3}
    \end{array}
\end{gather}
subject to $$s_{1}+s_{3}=(\lambda_{3}-\lambda_{2})(\mu_{2}-\mu_{5}),$$ and
\begin{gather}\tag{F5}\label{f5l7}
    \lambda=\begin{pmatrix}
        \lambda_{3}+\lambda_{5}-\lambda_{2}-\lambda_{6}&\lambda_{2}&\lambda_{3}\\0&\lambda_{5}&\lambda_{6}\\0&-\lambda_{2}&\lambda_{5}-\lambda_{2}-\lambda_{6}
    \end{pmatrix},\quad\mu=\begin{pmatrix}
        0&\mu_{2}&\mu_{3}\\0&\mu_{5}&\mu_{6}\\0&-\mu_{5}&-\mu_{6}
    \end{pmatrix}\quad\text{ if }\begin{array}{cc}
         \mu_{5},\,\mu_{6}\neq 0\\
         \mu_{2}\neq\mu_{3}
    \end{array}
\end{gather}
subject to
\begin{gather*}
    \mu_{2}-\mu_{3}=\mu_{5}-\mu_{6}=1-\lambda_{5}+\lambda_{6},\qquad
    s_{1}+s_{3}=(\mu_{2}-\mu_{5})(\lambda_{3}+\lambda_{5}-\lambda_{2}-\lambda_{6}-1).
\end{gather*}
\end{example}

Note that, 
\eqref{Leib_cond_th1} is equivalent to  \eqref{gen.deriv}, 
as in the case of Lie affgebras. In the latter case this is the only condition that needs to be imposed on $\lambda$, $\mu$ and $s$. This implies that not every Lie affgebra is a homogeneous Leibniz affgebra. For instance, in the situation of Example~\ref{example_abelian_fibre}, any choice of $\lambda$ gives an affine Lie bracket, but not necessarily a homogeneous Leibniz bracket. As the conditions for  derivative Leibniz brackets are even more restrictive, not every affine Lie bracket is a derivative Leibniz bracket either. The proposal for a class of Leibniz affgebras which includes all Lie affgebras is given in the subsequent Section~\ref{sec.Lie-type}.

\subsection{Lie-type Leibniz affgebras}\label{sec.Lie-type}
It is true that every Leibniz algebra with an antisymmetric bracket is a Lie algebra. In the affine case this depends on the form of the Leibnizian. This is more precisely stated in Proposition \ref{prop.Leib.Lie}.

\begin{lemma}\label{lem.Leib.Lie}
 Let $\{-,-\}$ be a bi-affine operation of $\mathfrak{a}$. Then, for all $o\in \mathfrak{a}$  the tri-affine map $\Lambda\colon \mathfrak{a}^3\to \mathfrak{a}$ given by 
 $$\Lambda(a,b,c)= \{a,a\}-\{\{c,c\},c\}+\{\{b,c\},\{b,c\}\}-\{\{a,a\},a\}+\{\{c,c\},b\}-\{\{b,b\},b\}+\{\{a,a\},b\}$$ satisfies \eqref{cond_gen}.  
\end{lemma}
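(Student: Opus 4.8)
The plan is to pass to the fibre $T_{o}\mathfrak{a}$, in which $o$ is the zero vector and the heap operation reads $\langle x,y,z\rangle=x-y+z$, so that the left-hand side $\widetilde{\Lambda}(a,b,c)$ of \eqref{cond_gen} is literally the alternating sum of the eight displayed evaluations of $\Lambda$. The first step is to record the elementary finite-difference identity
\[
\widetilde{\Lambda}=\Delta_{a}\Delta_{b}\Delta_{c}\,\Lambda,\qquad \Delta_{v}F\coloneqq F-F|_{v=o},
\]
where $\Delta_{v}$ is the operator that replaces the indicated argument of a map by $o$. Expanding the three nested differences reproduces precisely the eight signed terms occurring in \eqref{cond_gen}, so this identity is immediate. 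Note that this framing avoids having to decompose $\Lambda$ into homogeneous pieces, and in particular does not rely on $\Lambda$ being tri-affine.

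The key observation is that $\Delta_{v}$ annihilates every map independent of the variable $v$, and that $\Delta_{a}$, $\Delta_{b}$ and $\Delta_{c}$ commute. It therefore suffices to check that each of the seven summands defining $\Lambda$ fails to depend on at least one of $a$, $b$, $c$. This is read off directly from the formula: $\{a,a\}$ and $\{\{a,a\},a\}$ involve only $a$; the summand $\{\{b,b\},b\}$ involves only $b$; the summand $\{\{c,c\},c\}$ involves only $c$; the summands $\{\{b,c\},\{b,c\}\}$ and $\{\{c,c\},b\}$ do not involve $a$; and $\{\{a,a\},b\}$ does not involve $c$. Hence every summand is killed by at least one of $\Delta_{a}$, $\Delta_{b}$, $\Delta_{c}$.

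Applying $\Delta_{a}\Delta_{b}\Delta_{c}$ to $\Lambda$ summand by summand then gives $\widetilde{\Lambda}(a,b,c)=o$ for all $a,b,c\in\mathfrak{a}$, which is exactly \eqref{cond_gen}; since the computation uses only the fibre description at $o$, it is valid for every $o\in\mathfrak{a}$. I expect no real obstacle here beyond careful bookkeeping: the only points meriting attention are the verification of the finite-difference identity (a routine expansion) and the uniformity in $o$ (automatic, as the same argument applies at each base point).
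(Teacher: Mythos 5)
Your proof is correct. The paper itself gives no written proof of this lemma: like Lemma~\ref{lem.Leib.der}, it is left as a ``simple verification'', i.e., an implicit expansion of the eight signed evaluations of the seven-term expression for $\Lambda$ in the fibre $T_o\mathfrak{a}$ and a check that all resulting terms cancel. Your finite-difference argument organizes exactly that verification, but conceptually: the identity $\widetilde{\Lambda}=\Delta_a\Delta_b\Delta_c\Lambda$ matches the sign pattern of \eqref{cond_gen} (sign $=(-1)^{\#\text{ of substituted arguments}}$, which your expansion reproduces correctly); the operators $\Delta_a,\Delta_b,\Delta_c$ commute and are additive over the vector-space operations of $T_o\mathfrak{a}$, so they can be applied summand by summand to the seven-term heap expression; and your case analysis of which summand omits which variable is complete ($\{a,a\}$, $\{\{a,a\},a\}$ killed by $\Delta_b$; $\{\{b,b\},b\}$, $\{\{c,c\},c\}$, $\{\{b,c\},\{b,c\}\}$, $\{\{c,c\},b\}$ killed by $\Delta_a$; $\{\{a,a\},b\}$ killed by $\Delta_c$). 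What your route buys beyond the paper's implicit computation is the isolation of a general principle: any heap expression built from bi-affine terms, each of which is independent of at least one of the three arguments, has vanishing linearization at every point. This principle re-proves Lemma~\ref{lem.Leib.der} instantly ($\Lambda(a,b,c)=\{a,b\}$ is independent of $c$) and handles the Leibnizians of Example~\ref{ex.trivial} with no further work, and it makes the uniformity in $o$ transparent rather than requiring a separate remark.
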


\begin{definition}\label{Lie-type_LeibnizAffgebra}
A Leibniz affgebra $(\mathfrak{a},\{-,-\})$ with  Leibnizian $\Lambda(a,b,c)$ as in Lemma \ref{lem.Leib.Lie} is called 
a \textbf{Lie-type Leibniz affgebra}.

\end{definition}

The terminology introduced in Definition~\ref{Lie-type_LeibnizAffgebra} is justified in the following proposition.
\begin{proposition}\label{prop.Leib.Lie}
    Let $(\mathfrak{a},\{-,-\})$ be  a  Leibniz affgebra with bracket satisfying the affine antisymmetry \eqref{aff_antisy}. Then, $(\mathfrak{a},\{-,-\})$ is a  Lie affgebra if and only if $(\mathfrak{a},\{-,-\})$ is a Lie-type Leibniz affgebra.
\end{proposition}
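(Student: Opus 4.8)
The plan is to reduce the whole statement to a single fibre $T_o\mathfrak{a}$ and to compare, term by term, the genuine Leibnizian $\Lambda$ of \eqref{Leibnizian} with the tri-affine map in Lemma~\ref{lem.Leib.Lie}, which I will denote $\Phi$ for brevity. First I would record what affine antisymmetry means after linearisation. Writing the bracket on $T_o\mathfrak{a}$ as $\{a,b\}=[a,b]+\lambda(a)+\mu(b)+s$ as in Theorem~\ref{mainth_phi_gen}, and recalling that equality of bi-affine maps is detected on any fibre, the affine antisymmetry \eqref{aff_antisy} expands in the heap $\langle x,y,z\rangle=x-y+z$ to $[a,b]+[b,a]=[a,a]+[b,b]$. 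Putting $b=o$ forces $[a,a]=0$ for all $a$, so $[-,-]$ is antisymmetric and $\mathrm{Leib}(T_o\mathfrak{a})=0$; hence a Leibniz affgebra satisfying affine antisymmetry has a Lie fibre $(\mathfrak{g},[-,-])$, and conversely any Lie fibre returns affine antisymmetry.

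Next I would invoke the Lie-affgebra theory recalled before Section~3: on a Lie fibre the bracket $\{-,-\}$ is an affine Lie bracket, i.e.\ $(\mathfrak{a},\{-,-\})$ is a Lie affgebra, precisely when the generalized derivation condition \eqref{gen.deriv}, $\lambda([a,b])=[\lambda(a),b]-[a,\mu(b)]$, holds (here affine antisymmetry is automatic once $[-,-]$ is antisymmetric, so \cite[Theorem~3.1]{BRZ3} delivers exactly this). Consequently it suffices to prove, on a Lie fibre, that $\{-,-\}$ is Lie-type (that is, $\Lambda=\Phi$) if and only if \eqref{gen.deriv} holds.

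The computational heart is to evaluate $\Phi$ on $\mathfrak{g}$ using $[x,x]=0$ and antisymmetry, setting $\kappa:=\lambda+\mu$. Both $\Lambda$ and $\Phi$ have vanishing fully trilinear part (for $\Lambda$ by the Leibniz rule \eqref{LeibRule}, for $\Phi$ by Lemma~\ref{lem.Leib.Lie}), so they agree iff their constant, single-variable, and bilinear components agree and $\Phi$ carries no residual quadratic single-variable part. The components of $\Lambda$ are read off from \eqref{gen0}–\eqref{gen6}. I expect the outcome to be: the constant term $\mu(s)+s$ and all three single-variable components match those of $\Lambda$ unconditionally; the only quadratic single-variable terms of $\Phi$ are the three contributions $-[\kappa(x),x]$ arising from the summands $-\{\{x,x\},x\}$, which must vanish; the $(a,c)$-bilinear component of $\Phi$ is $0$, forcing $\Lambda_{ac}=\lambda([a,c])+[a,\mu(c)]-[\lambda(a),c]=0$, which is exactly \eqref{gen.deriv}; and the $(a,b)$- and $(b,c)$-components of $\Phi$, namely $[\kappa(a),b]$ and $\kappa([b,c])+[\kappa(c),b]$, coincide with the corresponding components of $\Lambda$ exactly when \eqref{gen.deriv}, read with suitably permuted arguments and simplified by antisymmetry, holds. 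Assembling these, since $b=a$ in \eqref{gen.deriv} gives $[\kappa(a),a]=0$, the generalized derivation condition simultaneously clears the residual quadratic terms and matches every bilinear component, while conversely matching the $(a,c)$-component already returns \eqref{gen.deriv}; hence $\Lambda=\Phi\iff\eqref{gen.deriv}$, and combined with the previous paragraph this yields the desired equivalence Lie affgebra $\iff$ Lie-type.

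The main obstacle I anticipate is bookkeeping rather than conceptual: $\Phi$ is \emph{not} literally tri-affine in a general Leibniz setting, because the quadratic terms $[\kappa(x),x]$ survive, and the argument only closes because affine antisymmetry first collapses the fibre to a Lie algebra and then \eqref{gen.deriv} annihilates exactly those terms. Keeping the multilinear decomposition of $\Phi$ organised while repeatedly using antisymmetry to reconcile the differently-permuted incarnations of \eqref{gen.deriv} appearing in the three bilinear components is the step most prone to error, and is where I would be most careful.
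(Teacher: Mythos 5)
Your proof is correct, but it takes a genuinely different route from the paper's. The paper never linearizes: it substitutes affine antisymmetry \eqref{aff_antisy} directly into the Leibnizian \eqref{Leibnizian} (rewriting $\{a,c\}$ and $\{a,\{b,c\}\}$ via antisymmetry) and observes that the resulting expression coincides with the Lie-type form of Lemma~\ref{lem.Leib.Lie} if and only if the affine Jacobi identity \eqref{aff_jac} holds --- a short heap-level computation that needs neither the fibre data $(\lambda,\mu,s)$ nor any external result, and which exhibits the difference between the true Leibnizian and the Lie-type one as exactly the affine Jacobi defect. You instead reduce to a fibre, show that antisymmetry forces the fibre to be a Lie algebra, and then sandwich the statement between two characterizations in terms of the generalized-derivation condition \eqref{gen.deriv}: the cited result \cite[Theorem~3.1]{BRZ3} for ``Lie affgebra'', and your component-matching argument (via the formulas \eqref{gen.Leibnizian} of Theorem~\ref{mainth_phi_gen}) for ``Lie-type''. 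Your computations check out: with $\kappa=\lambda+\mu$, the $(a,c)$-component of the Lie-type map is indeed $0$, so matching it to \eqref{gen5} is literally \eqref{gen.deriv}; the $(a,b)$- and $(b,c)$-components are $[\kappa(a),b]$ and $\kappa([b,c])+[\kappa(c),b]$, which match \eqref{gen4} and \eqref{gen6} precisely under permuted instances of \eqref{gen.deriv}; and the residual quadratic terms $-[\kappa(x),x]$ vanish because \eqref{gen.deriv} at $b=a$ gives $[\kappa(a),a]=0$, while the constant and linear parts match unconditionally. What your route buys is explicit classification data: it shows that over a Lie fibre the Lie-type structures are exactly those for which $(\lambda,-\mu,\lambda)$ is a generalized derivation, in effect specializing the conditions \eqref{gen.Leibnizian.Lie}. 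What it costs is length, delicate bookkeeping of the permuted incarnations of \eqref{gen.deriv} (exactly the risk you flagged), and reliance on \cite{BRZ3}, whereas the paper's argument is self-contained and purely structural.
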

\begin{proof}
    If $(\mathfrak{a},\{-,-\})$ is a Leibniz affgebra whose affine bracket $\{-,-\}$ satisfies the affine antisymmetry \eqref{aff_antisy}, then the Leibnizian takes the following form: 
    \begin{align}\label{demLeibLieType}
        \Lambda(a,b,c)&=\{\{a,c\},b\}-\{\{a,b\},c\}+\{a,\{b,c\}\}\;\footnotesize\textnormal{(by \eqref{Leibnizian})}\\\nonumber
        &=\{\{a,a\}-\{c,a\}+\{c,c\},b\}-\{\{a,b\},c\}
        \\\nonumber&+\{a,a\}-\{\{b,c\},a\}+\{\{b,c\},\{b,c\}\}\;\footnotesize\textnormal{(by \eqref{aff_antisy})}\\\nonumber
        &=\{a,a\}+\{\{b,c\},\{b,c\}\}+\{\{a,a\},b\}+\{\{c,c\},b\}\\\nonumber
        &-\{\{c,a\},b\}-\{\{a,b\},c\}-\{\{b,c\},a\}.
    \end{align}
Hence, expression \eqref{demLeibLieType} for the Leibnizian coincides with the one given in Lemma \ref{lem.Leib.Lie} if and only if the affine Jacobi identity \eqref{aff_jac} holds. In other words, $(\mathfrak{a},\{-,-\})$ is a Lie-type Leibniz affgebra if and only if $(\mathfrak{a},\{-,-\})$ is a Lie affgebra.
\end{proof} 

Construction of Lie-type Leibniz affgebras with a prescribed Leibniz algebra fibre $(\mathfrak{l},[-,-])$ via the formula \eqref{th_affLeib_bracket} boils down to solving the following seven equations that result from equations \eqref{gen.Leibnizian} in Theorem~\ref{mainth_phi_gen}:
\begin{subequations}\label{gen.Leibnizian.Lie}
    \begin{gather}
    \label{Leib_Lie_cond_0}
    [s,s]=0\\
    \label{Leib_Lie_cond_1}
    [a,a]-[[a,a],a]-[\lambda(a),a]-[\mu(a),a]-[s,a]-[a,s]=0\\
    \label{Leib_Lie_cond_2}
    [\lambda(b),\lambda(b)]+[\lambda(b),s]+[s,\lambda(b)]-[[b,b],b]-[\lambda(b),b]-[\mu(b),b]-\lambda([b,b])=0\\
    \label{Leib_Lie_cond_3}
    -[[c,c],c]-[\lambda(c),c]-[\mu(c),c]+[\mu(c),\mu(c)]+[\mu(c),s]+[s,\mu(c)]=0\\
    \label{Leib_Lie_cond_4}
    [[a,a],b]+[\mu(a),b]-[a,\lambda(b)]+\lambda([a,b])=0\\
    \label{Leib_Lie_cond_5}
    \lambda([a,c])+[a,\mu(c)]-[\lambda(a),c]=0\\\nonumber
    [[b,c],[b,c]]+[\lambda(b),[b,c]]+[\mu(c),[b,c]]+[s,[b,c]]+[[b,c],\lambda(b)]+[[b,c],\mu(c)]+  \\\label{Leib_Lie_cond_6}
    [[b,c],s]+[\lambda(b),\mu(c)]+[\mu(c),\lambda(b)]+\lambda([b,c])+[[c,c],b]+[\lambda(c),b]+[\mu(b),c]=0 
    \end{gather}
\end{subequations}
for all $a,b,c\in\mathfrak{l}$.
\begin{example}
In the case of the Leibniz algebra $L_3$, conditions \eqref{gen.Leibnizian.Lie} imply that all $\lambda$, $\mu$ and $s$ must be zero. 
This shows that not all Leibniz algebras truly have affine versions of Lie-type.
\end{example}
\begin{example}
    Consider the 2-dimensional Leibniz algebra $L_{4}$. To determine all Lie-type Leibniz affgebras with fibre $L_{4}$ we need to check conditions \eqref{gen.Leibnizian.Lie} according to Theorem \ref{mainth_phi_gen}. Let us assume the following notation: $\lambda=\begin{pmatrix}\lambda_1&\lambda_2\\\lambda_3&\lambda_4\end{pmatrix},$ $ \mu=\begin{pmatrix}\mu_1&\mu_2\\\mu_3&\mu_4\end{pmatrix},$ $s=\begin{pmatrix}s_{1}\\s_2\end{pmatrix}.$ The resulting constraints are:
    \begin{itemize}
    \item[(i)] From \eqref{Leib_Lie_cond_0}, we obtain immediately that $s_2=0$.
    \item[(ii)] From \eqref{Leib_Lie_cond_1}, by taking $a=e_2$, it yields the constraint
    $1-\lambda_4-\mu_4=0.$
    \item[(iii)] By setting $b=e_{2}$ in \eqref{Leib_Lie_cond_2} we obtain that 
    $\lambda_{3}=0$\text{ and }
        $-\lambda_4-\mu_4-\lambda_1+\lambda_4^2=0.$
    \item[(iv)] Similarly, setting $c=e_{2}$ in \eqref{Leib_Lie_cond_3}, it yields to 
    $-\lambda_4-\mu_4+\mu_4^2=0.$
    \item[(v)] Furthermore, from condition \eqref{Leib_Lie_cond_4}, it is obtained that $\mu_{3}=0$ when we set $a=e_{1}$ and $b=e_{2}$, and also
    $\mu_{4}-\lambda_{4}+\lambda_{1}=0$
    by taking $a=e_{2}=b.$
    \item[(vi)] From equation \eqref{Leib_Lie_cond_6}, setting $b=e_{2}=c$, it is obtained that 
    $2\lambda_{4}\mu_{4}+\lambda_{1}+\lambda_{4}+\mu_{4}=0.$
    \end{itemize}
Solving the resulting system of equations, we obtain two parametric families of Leibniz affgebras:
\begin{enumerate}
   \item[(F1)]$ \qquad \lambda=\begin{pmatrix}-1&\lambda_2\\0&0\end{pmatrix} \qquad \mu=\begin{pmatrix}\mu_1&\mu_2\\0&1\end{pmatrix} \qquad \; \:s=\begin{pmatrix}s_{1}\\0\end{pmatrix}$,
    
    \item[(F2)] $ \qquad \lambda=\begin{pmatrix}3&\lambda_2\\0&2\end{pmatrix} \quad \; \; \; \quad \mu=\begin{pmatrix}\mu_1&\mu_2\\0&-1\end{pmatrix} \qquad s=\begin{pmatrix}s_{1}\\0\end{pmatrix}$.
    \end{enumerate}
    These families were derived by evaluating the compatibility conditions \eqref{gen.Leibnizian.Lie} on the basis elements $e_{1}$ and $e_{2}$. To guarantee sufficiency, we must also ensure that these constraints hold for arbitrary linear combinations $\alpha_{1}e_{1}+\alpha_{2}e_{2}$, but no extra conditions arise from this check.

    As we have already seen in Example \ref{L4.2dim.ex}, automorphisms of $L_4$ have the form $\psi=\begin{pmatrix}
    \theta^{2}&\delta\\0&\theta
\end{pmatrix}$ with $\theta\neq 0$. By applying Corollary \ref{iso_charac} for the family (F1), $\mathfrak{a}(L_{4};\lambda,\mu,s)$ is isomorphic to $\mathfrak{a}(L_{4};\lambda',\mu',s')$ if and only if 
\[\lambda'=\begin{pmatrix}
    -1 & \frac{\delta+\theta^2\lambda_2-\theta^2 q_2}{\theta}\\0&0
\end{pmatrix},\quad\mu'=\begin{pmatrix}
    \mu_1 & \frac{\delta-\theta^2 q_2-\delta\mu_1+\theta^2\mu_2}{\theta}\\0&1
\end{pmatrix}.\]
By choosing $q_2=\frac{\delta+\theta^2\lambda_2}{\theta^2}$, it implies that $\lambda'=\begin{pmatrix}
    -1&0\\0&0
\end{pmatrix}$ and $\mu'=\begin{pmatrix}
    \mu_1 &\frac{\theta^2(\mu_2-\lambda_2)-\delta\mu_1}{\theta}\\0 &1
\end{pmatrix}$. In case $\mu_2=\lambda_2$, choosing $\delta=0$, $\mu'=\begin{pmatrix}
    \mu_1 &0\\0&1
\end{pmatrix}$ and it implies that $s'=\begin{pmatrix}
    \theta^2((2-\mu_{1})q_1+s_1-\lambda_2^2)\\
    0
\end{pmatrix}$. Consequently, for $\mu_{1}$ we obtain two different cases: if $\mu_1\neq 2$, by choosing $q_1=\frac{\lambda_2^2-s_1}{2-\mu_1}$, then we obtain $s'=0$; and if $\mu_1=2$, then $s'=\begin{pmatrix}
    \theta^2(s_1-\lambda_2^2)\\0
\end{pmatrix}$, which can be identified within each square class of $s_1-\lambda_2^2$. When $\mu_2\neq \lambda_2$ and $\mu_1=0$, by setting $\theta=\frac{1}{\mu_2-\lambda_2}$, $\mu'=\begin{pmatrix}
    0&1\\0&1
\end{pmatrix}$ and $s'=0$ by setting $q_1=\frac{\lambda_2\mu_2-s_1}{2}$. The last case is when $\mu_2\neq\lambda_2$ and $\mu_1\neq 0$. By choosing $\delta=\frac{\theta^2(\mu_2-\lambda_2)}{\mu_1}$, it implies that $\mu'=\begin{pmatrix}
    \mu_1 &0\\
    0 &1
\end{pmatrix}$ and $s'=\theta^2\left(2q_1+\frac{(\mu_2+\lambda_2(\mu_1-1))^2}{\mu_1^2}-\frac{\mu_2\lambda_2+\lambda_2^2(\mu_1-1)}{\mu_1}+s_1-q_1\mu_1-\frac{\mu_2^2+\lambda_2\mu_2(\mu_1-1)}{\mu_1}\right)e_1$. If $\mu_1\neq 2$, choosing appropriate $q_1$, we can set $s'=0$. In case when $\mu_1=2$ we obtain $s'=
    \theta^2\left(s_1-\left(\frac{(\lambda_2+\mu_2)}{2}\right)^2\right)e_1$.

In short, for the family (F1) we have obtained the following isomorphism classes:
\[\mathfrak{a}(L_{4};\lambda,\mu,s)\simeq \begin{cases}
\mathfrak{a}\left(L_{4};\lambda'=\begin{pmatrix}-1&0\\0&0\end{pmatrix},\mu'=\begin{pmatrix}2&0\\0&1\end{pmatrix},s'=\begin{pmatrix}\Omega\\0\end{pmatrix}\right),\quad \;\mu_1= 2\\
\mathfrak{a}\left(L_{4};\lambda'=\begin{pmatrix}-1&0\\0&0\end{pmatrix},\mu'=\begin{pmatrix}\mu_1&0\\0&1\end{pmatrix},  s'=\begin{pmatrix}0\\0\end{pmatrix}\right),   \quad  \;\mu_1\neq 0,2 \; \textnormal{or}\; \begin{array}{cc}\lambda_2=\mu_2\\\mu_1=0\end{array}\\
\mathfrak{a}\left(L_{4};\lambda'=\begin{pmatrix}-1&0\\0&0\end{pmatrix},\mu'=\begin{pmatrix}0&1\\0&1\end{pmatrix},s'=\begin{pmatrix}0\\0\end{pmatrix}\right),\quad \lambda_2\neq \mu_2, \;\mu_1=0,
\end{cases}\]
 where $\Omega$ is any element of the square class of $s_1-\left(\frac{\lambda_2+\mu_2. }{2}\right)^2$ or $0$. 

 By applying Corollary \ref{iso_charac} for the family (F2), $\mathfrak{a}(L_{4};\lambda,\mu,s)$ is isomorphic to $\mathfrak{a}(L_{4};\lambda',\mu',s')$ if and only if 
\[\lambda'=\begin{pmatrix}
    3 & \frac{-\delta+\theta^2\lambda_2-\theta^2 q_2}{\theta}\\0&2
\end{pmatrix},\quad\mu'=\begin{pmatrix}
    \mu_1 & \frac{-\delta-\theta^2 q_2-\delta\mu_1+\theta^2\mu_2}{\theta}\\0&-1
\end{pmatrix}.\]
By means of analogous reasoning to those carried out for the family (F1), we obtain the following isomorphism classes for the family (F2):
\[\mathfrak{a}(L_{4};\lambda,\mu,s)\simeq \begin{cases}
\mathfrak{a}\left(L_{4};\lambda'=\begin{pmatrix}3&0\\0&2\end{pmatrix},\mu'=\begin{pmatrix}-2&0\\0&-1\end{pmatrix},s'=\begin{pmatrix}\Omega\\0\end{pmatrix}\right),\quad \;\mu_1= -2\\
\mathfrak{a}\left(L_{4};\lambda'=\begin{pmatrix}3&0\\0&2\end{pmatrix},\mu'=\begin{pmatrix}\mu_1&0\\0&-1\end{pmatrix},  s'=\begin{pmatrix}
    0\\0
\end{pmatrix}\right), \quad\mu_1\neq 0,-2 \; \textnormal{or}\; \begin{array}{cc}\lambda_2=\mu_2\\\mu_1=0\end{array}\\
\mathfrak{a}\left(L_{4};\lambda'=\begin{pmatrix}3&0\\0&2\end{pmatrix},\mu'=\begin{pmatrix}0&1\\0&-1\end{pmatrix},s'=\begin{pmatrix}0\\0\end{pmatrix}\right),\quad \lambda_2\neq \mu_2, \;\mu_1=0,
\end{cases}\]
 where $\Omega$ is any element of the square class of $s_1-\left(\frac{\lambda_2+\mu_2}{2}\right)^2$ or $0$. 
 \end{example}
 
 \section*{Funding}
The research  of Tomasz Brzezi\'nski and Krzysztof Radziszewski is partially supported by the National Science Centre, Poland, through the WEAVE-UNISONO (Grant no.\ 2023/05/Y/ST1/00046). 

The research of Brais Ramos Pérez is partially supported by Ministerio de Ciencia e Innovación, Spain (Grant no. PID2020-115155GB-I00) and Xunta de Galicia (Grant no. ED431C 2023/31 and ED481A-2023-023).

\bibliographystyle{amsalpha}

\end{document}